\def\paragraph{\@startsection{paragraph}{4}%
  \z@\z@{-\fontdimen2\font}%
  {\normalfont\bfseries}}
\tikzset{>={Latex}}
\NewDocumentCommand{\xrightarrows}{ O{}O{} }{%
\mathrel{%
\vcenter{\hbox{%
\begin{tikzpicture}
  \node[minimum width=1cm,minimum height=1ex,anchor=south,align=center] (a){\text{\vphantom{hg}#1}\\[0.5ex] \vphantom{hg}#2};
  \draw[<-,dashed] ([yshift=-0.35ex]a.west) -- ([yshift=-0.35ex]a.east);
  \draw[->] ([yshift=0.35ex]a.west) -- ([yshift=0.35ex]a.east);
\end{tikzpicture}}}}}
\newcommand{\N}{\mathbb{N}}
\newcommand{\Q}{\mathbb{Q}}
\newcommand{\R}{\mathbb{R}}
\newcommand{\C}{\mathbb{C}}
\newcommand{\mc}{\mathcal}
\renewcommand{\a}{\alpha}
\renewcommand{\b}{\beta}
\newcommand{\g}{\gamma}
\renewcommand{\d}{\delta}
\newcommand{\e}{\varepsilon}
\newcommand{\w}{\omega}
\newcommand{\s}{\sigma}
\newcommand{\ph}{\varphi}
\newcommand{\defeq}{\overset{\tiny\operatorname{def}}{=}}
\newcommand{\lp}{\left(}
\newcommand{\rp}{\right)}
\newcommand{\lbar}{\left|}
\newcommand{\rbar}{\right|}
\newcommand{\lnorm}{\left\|}
\newcommand{\rnorm}{\right\|}
\newcommand{\medlnorm}{\big\|}
\newcommand{\medrnorm}{\big\|}
\newcommand{\medlp}{\big(}
\newcommand{\medrp}{\big)}
\newcommand{\set}[1]{\left\{#1\right\}}
\renewcommand{\r}{\rightarrow}
\newcommand{\norm}[1]{\left\|#1 \right\|} 
\renewcommand{\emptyset}{\varnothing}
\newcommand{\inv}{^{-1}}
\newcommand{\offd}{\triangle}
\newcommand{\ond}{\blacktriangle}
\newcommand{\y}{\mathbf{y}}
\newcommand{\x}{\mathbf{x}}
\newcommand{\im}{\operatorname{im}}
\newcommand{\coup}{\mathscr{C}}
\newtheorem{theorem}{Theorem}
\newtheorem{question}{Question}
\newtheorem{proposition}[theorem]{Proposition}
\newtheorem{lemma}[theorem]{Lemma}
\newtheorem{corollary}[theorem]{Corollary}
\theoremstyle{definition}
\newtheorem{example}[theorem]{Example}
\newtheorem{remark}[theorem]{Remark}
\newtheorem{definition}[theorem]{Definition}
\newtheorem{claim}{Claim}
\newcommand{\dgm}{\mc{D}}
\newcommand{\dl}{d_p[l^q]}
\newcommand{\dlp}{d_p[l^2]}
\newcommand{\dlpq}{d_p[l^q]}
\newcommand{\dlt}{d_2[l^2]}
\newcommand{\dlpp}{d_p[l^p]}
\newcommand{\dlinf}{d_{\infty}[l^q]}
\newcommand{\dgml}{\dgm_p[l^q]}
\newcommand{\dgmlp}{\dgm_{p}[l^2]}
\newcommand{\dgmlt}{\dgm_{2}[l^2]}
\newcommand{\dgmlpq}{\dgm_{p}[l^q]}
\newcommand{\dgmlinf}{\dgm_{\infty}[l^q]}
\newcommand{\Deltapt}{\Delta^{\bullet}}
\newcommand{\pir}{\pi_{\R^2}}
\newcommand{\pushright}[1]{\ifmeasuring@#1\else\omit\hfill$\displaystyle#1$\fi\ignorespaces}
\newcommand{\pushleft}[1]{\ifmeasuring@#1\else\omit$\displaystyle#1$\hfill\fi\ignorespaces}
\title{Geodesics in persistence diagram space}
\author{Samir Chowdhury}
\begin{document}

\date{\today}

\email{chowdhury.57@osu.edu}

\address{Department of Mathematics, The Ohio State University.}

\begin{abstract} 
It is known that for a variety of choices of metrics, including the standard bottleneck distance, the space of persistence diagrams admits geodesics. Typically these existence results produce geodesics that have the form of a convex combination. More specifically, given two persistence diagrams and a choice of metric, one obtains a bijection realizing the distance between the diagrams, and uses this bijection to linearly interpolate from one diagram to another. We prove that for several families of metrics, every geodesic in persistence diagram space arises as such a convex combination. For certain other choices of metrics, we explicitly construct infinite families of geodesics that cannot have this form. 
\end{abstract} 

\maketitle

\section{Introduction and problem statement}

A \emph{persistence diagram} or \emph{barcode} is a countable multiset of above-diagonal points in $\R^2$ along with the diagonal, which is counted with countably infinite multiplicity. We denote the collection of all possible diagrams by $\dgm$. Persistence diagrams were originally formulated as shape descriptors arising from applying \emph{persistent homology} to point cloud or metric datasets. In recent years, they have been generalized to the point where they can be studied as algebraic objects in their own right, without necessarily arising as a shape descriptor for a dataset. Relevant to this paper is the development of a variety of metrics on persistence diagrams with the overarching goal of defining Fr\'{e}chet means and related generalizations \cite{mileyko2011probability, turner2013means, tmmh, munch2015probabilistic}.

Persistence diagrams are typically compared using the \emph{bottleneck distance}, which is an $l^\infty$ matching distance where the matching cost is computed using an $l^\infty$ ground metric. In the aforementioned papers, the objects of study were variants of the bottleneck distance. Specifically, \cite{mileyko2011probability} considered $l^p$ matching for $p \in [1,\infty)$ with the $l^\infty$ ground metric, \cite{tmmh, munch2015probabilistic} considered $l^2$ matching with the $l^2$ (Euclidean) ground metric, and \cite{turner2013means} considered $l^p$ matching with an $l^p$ ground metric for $p \in [1,\infty]$.

By an overload of notation, let $\emptyset$ denote the \emph{empty diagram} consisting of just the diagonal with countably infinite multiplicity. In \cite{tmmh}, the authors defined a type of $l^2$ metric on $\dgm$ (denoted $d_2[l^2]$) and studied the space $\mc{D}_2[l^2] \defeq \{X \in \dgm : d_2[l^2](X,\emptyset) < \infty\}$. On this space, they characterized Fr\'{e}chet means and gave a procedure for computing these means. A necessary step for their constructions was a result showing that $(\mc{D}_2[l^2],d_2[l^2])$ is an \emph{Alexandrov space} with nonnegative curvature \cite[Theorem 2.5]{tmmh}. The proof of \cite[Theorem 2.5]{tmmh} in turn requires one to show that all geodesics in this space are of a convex combination form. Indeed, we show in Section \ref{sec:branch} that for certain other choices of metrics on $\dgm$, there exist geodesics which are not given by a convex combination form, and moreover there exist \emph{branching geodesics} which preclude a space from having nonnegative curvature in the sense of Alexandrov. Finally in Section \ref{sec:geod-char}, we show that for certain families of metrics, including the important case $p=q=2$, all geodesics are indeed of a convex combination form.

Our proof of this characterization result follows the strategy used by Sturm in proving an analogous result about geodesics in the space of metric measure spaces \cite{sturm2012space}. The existence results about branching geodesics and geodesics not given by a convex combination form are related to constructions we previously investigated in \cite{dgh-era}.

\medskip
\paragraph{Contributions} Following \cite{munch2015probabilistic}, we study persistence diagram metrics $\dlpq$ which involve an $l^p$ matching metric over an $l^q$ ground metric. For certain families of metrics, we show that $\dgm$ has geodesics that can be uniquely characterized as convex combinations. We are able to prove our result for the following families:
\begin{itemize}
\item $q = 2$, $p \in (1,\infty)$
\item $p = q \in [2,\infty)$.
\end{itemize}
We also provide counterexamples showing that geodesics are \emph{not} uniquely characterized in the cases $p = q =1$ and $p =\infty, \, q\in[1,\infty]$. 

Said differently: whereas it is easy to show that any optimal bijection yields a geodesic (via the convex-combination form), here we prove the harder reverse direction, i.e. that any geodesic arises as the convex-combination geodesic of an optimal bijection, at least for certain ranges of $p,q$. Furthermore, for certain other ranges of $p$ and $q$, we show that the negative result holds. So our focus is on the dashed line shown below.
\begin{align*}
\centering
\{\text{optimal bijection}\} \xrightarrows[\text{convex combination}] \{\text{geodesic}\} 
\end{align*}

\section{Definitions and statement of results}

Given sets $X,Y$ and an element $z \in X\times Y$, we write $\pi_X(z), \pi_Y(z)$ to denote the canonical projections of $z$ into $X$ and $Y$, respectively. 
The diagonal in $\R^2$ is denoted $\Delta :=	\{(x,x) \in \R^2 : x \in \R\}$. We also define $\Delta_\Q:= \{(x,x) \in \R^2 : x \in \Q\}$, i.e. the rational points on the diagonal. We write $\Delta^\infty$ or $\Delta_\Q^\infty$ to denote these sets counted with countably infinite multiplicity. The part of the plane above the diagonal is denoted $\R^2_>$, and the part of the plane above and including the diagonal is denoted $\R^2_{\geq}$. The $p$-norm in $\R^2$, for $p \in [1,\infty]$, is denoted $\lnorm \cdot \rnorm_p$. Given an above-diagonal point $x \in \R^2_>$, we write $\lnorm x - \Delta \rnorm_p$ to denote the perpendicular distance (in $p$-norm) between $x$ and the diagonal. We also write $\pi_{\Delta}(x)$ to denote the projection of $x$ onto the diagonal. When we suppress notation and write $\lnorm \cdot \rnorm$, we mean the Euclidean norm in $\R^2$. We will occasionally use the canonical identification between $\R^2$ and $\C$. 

The transpose of a vector $[v_1,v_2,\ldots,v_n]$ will be denoted $[v_1,v_2,\ldots,v_n]^T$.
Given an infinite-dimensional vector $V \in \R^\N$ and a function $f$ defined on each element of $V$, we will write $f(V)$ to denote $(f(v_1),f(v_2),\ldots ).$

\begin{definition}
\label{def:pdgm}
A \emph{persistence diagram} is a countable subset of $\R^2_> \times \N$ along with countably infinite copies of $\Delta$. 
This naming convention differs slightly from that of the standard persistence diagram (cf. \cite{tmmh}), which involves multisets in $\R^2$. However, we introduce the $\N$ coordinate so that different copies of the same point can be defined to occupy different entries in $\N$. We refer to the $\N$ component as the indexing component, and the $\R^2$ component as the geometric component. For a persistence diagram $X$, we let $X_>$ denote the above-diagonal portion of the diagram. The collection of all persistence diagrams is denoted $\dgm$. For any $x \in X$, the cardinality of $(\pi_{\R^2})\inv \circ \pi_{\R^2}(x)$ is the multiplicity of $\pi_{\R^2}(x)$. We write $m(x)$ to denote the multiplicity of $x$. 

Note that persistence diagrams are typically formulated as multisets, i.e. as a subset $Z\subseteq \R^2_{\geq}$ along with a multiplicity function $m:Z \r \N$. This multiset formulation can be recovered from the $\R^2_\geq \times \N$ formulation given above; the advantage of the above formulation is that it enables some of our later arguments involving convergence of sequences.

Crucially, given persistence diagrams $X,Y$ and points $x \in X,\, y\in Y$, we write $\norm{x - y}_p$ to mean $\norm{\pir(x) - \pir(y)}_p$. In other words, when computing distances between points in persistence diagrams, only the geometric component of each point is considered.
\end{definition}

\begin{example}
\label{ex:pers-dgm}
Let $X = \{(0,1,1)\} \cup \Delta^\infty$, $Y = \{(0,1,1), (0,1,2), (1,3,3)\} \cup \Delta^\infty$, and $Z= \{(0,1,2)\} \cup \Delta^\infty$. All three are persistence diagrams. Each of $X$ and $Z$ has a single off-diagonal point at $(0,1)$. $Y$ has an off-diagonal with multiplicity two at $(0,1)$, and another point with multiplicity one at $(1,3)$. For any of the metrics we later define, the distance between $X$ and $Z$ is zero. This is because their off-diagonal points only differ in the $\N$ coordinate, which is not relevant for the distances we consider. 
\end{example}

Let $X, Y \in \dgm$ be two persistence diagrams. We can always obtain bijections between $X$ and $Y$, matching points to the diagonal if needed. Next we introduce a family of $l^p$ matching distances which compute the expected cost of an optimal matching between $X$ and $Y$, where optimality is with respect to an $l^q$ ground metric. 
Given $p \in [1,\infty)$, $q \in [1,\infty]$, the \emph{$l^p[l^q]$ matching distance} between persistence diagrams is the function $\dlpq: \dgm \times \dgm \r [0,\infty]$ given by writing 
\[\dlpq(X,Y) \defeq \inf \set{\lp \sum_{x\in X}  \lnorm x - \ph(x)\rnorm_q^p \rp^{1/p}  
: \ph: X \r Y \text{ a bijection} } \text{ for any } X,Y \in \dgm.\]
For $p = \infty$, we have
\[\dlpq(X,Y) \defeq \inf \set{\sup_{x\in X}  \lnorm x - \ph(x)\rnorm_q 
: \ph: X \r Y \text{ a bijection} } \text{ for any } X,Y \in \dgm.\] 
A bijection $\ph$ for which the infimum above is attained is said to be \emph{optimal}.

\begin{remark} Here are special cases of the preceding definition.
\begin{itemize}
\item The \emph{bottleneck distance} corresponds to $p = \infty, \, q= \infty$.
\item The case $p \in [1,\infty)$ and $q = \infty$ was considered in \cite{mileyko2011probability}.
\item Both \cite{tmmh, munch2015probabilistic} considered the case $p=2, \, q=2$.
\item \cite{turner2013means} considered the case $p = q \in [1,\infty]$. 
\end{itemize}
\end{remark}

\begin{definition} For $p,q \in [1,\infty]$, the set $\{X \in \dgm : \dlpq(X,\emptyset) < \infty \}$ is denoted by $\dgmlpq$. Note that if $X \in \dgmlpq$ and $p < \infty$, then any open ball $U \subseteq \R^2_>$ separated from the diagonal by some $\e > 0$ can contain only finitely many points of $\pir(X)$.  
\end{definition}

\begin{remark} We make some simple but important remarks to guide the reader:
\begin{itemize}
\item Typically the persistence diagram is defined to be a multiset of points in the extended plane (including $\infty$). Note that our definition only allows for points on the plane, which is in keeping with the definition in \cite{tmmh}.
\item In \cite{tmmh,munch2015probabilistic}, the distance $\dlpq$ above is called the $l^q$-Wasserstein metric; we avoid this terminology because Wasserstein distances typically refer to distances between probability measures. 


\item A priori, $\dlpq$ is only a pseudometric on $\dgm$. To see this, let $A, B $ be two countable dense subsets of $[0,1]$ that are not equal. Write $X:= \{ (0,a,1) : a \in A\} \cup \Delta^\infty$ and $Y:= \{(0,b,1) : b\in B\} \cup \Delta^\infty$. Then $\dlpq(X,Y) = 0$, even though $X \neq Y$. 
\end{itemize}
\end{remark}

A \emph{curve} in $(\dgmlpq,\dlpq)$  is a continuous map $\g:[0,1] \r \dgmlpq$. Such a curve is called a \emph{geodesic} \cite[Section I.1]{bridson2011metric} if for any $s, t \in [0,1]$, 
\[\dlpq(\g(s),\g(t)) = |t-s|\cdot \dlpq(\g(0),\g(1)).\]
In \cite{tmmh}, the authors gave a constructive proof showing that $(\dgmlt,\dlt)$ is a \emph{geodesic space}, i.e. that for any $X, Y \in \dgmlt$, there exists a geodesic from $X$ to $Y$. As a precursor to the construction, they first proved the following result showing that between any $X, Y \in \dgmlt$, there exists an optimal bijection $\ph$ realizing the infimum in the definition of $\dlt(X,Y)$:

\begin{theorem}[Existence of optimal bijections, \cite{tmmh} Proposition 2.3]
\label{thm:existence}
Let $X, Y \in \dgmlt$. Then there exists a bijection $\ph: X \r Y$ such that $\sum_{x \in X} \norm{x - \ph(x)}^2 = \dlt(X,Y)^2$. 
\end{theorem}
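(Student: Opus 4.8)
The plan is to prove existence of a minimizer by a direct variational argument: pass to a minimizing sequence of bijections, extract a limit by a diagonal procedure, and then verify that the limit is admissible and realizes the infimum via lower semicontinuity of the cost. The whole argument rests on the structural consequence of $X,Y \in \dgmlt$ recorded above, namely that for every $\e > 0$ only finitely many off-diagonal points of $X$ or $Y$ lie farther than $\e$ from $\Delta$.

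First I would reduce a bijection $\ph \colon X \to Y$ to the combinatorial datum of a partial matching $M \subseteq X_> \times Y_>$ between the off-diagonal points. Once $M$ is fixed, the cost is minimized by sending each unmatched off-diagonal point $z$ to its diagonal projection $\pi_\Delta(z)$, which is always possible since each diagram contains infinitely many diagonal points. This gives the closed form
\[
\cost(M) = \sum_{(x,y)\in M}\norm{x-y}^2 + \sum_{x\in X_> \text{ unmatched}}\norm{x-\Delta}^2 + \sum_{y\in Y_> \text{ unmatched}}\norm{y-\Delta}^2,
\]
and reduces the problem to minimizing $\cost(M)$ over partial matchings. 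I would also normalize the minimizing sequence so that a pair $(x,y)$ is matched only when $\norm{x-y} \le \norm{x-\Delta}+\norm{y-\Delta}$ (otherwise unmatching the pair does not increase the cost); this guarantees that matched points are comparably far from $\Delta$ and bounds the total cost of pairs made up of near-diagonal points by the tails below.

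Next, fix a minimizing sequence $(M_n)$ with $\cost(M_n)\to \dlt(X,Y)^2$. Since $X_>$ and $Y_>$ are countable, the candidate pair set $X_>\times Y_>$ is countable, so a diagonal argument produces a subsequence along which the indicator $\1[(x,y)\in M_n]$ converges for \emph{every} pair; let $M^*$ collect the pairs lying in $M_n$ for all large $n$. Because each $M_n$ is a matching, so is $M^*$: a double match in the limit would force a double match in $M_n$ for large $n$. The finiteness property now enters through the tail estimate $\sum_{\norm{z-\Delta}\le \e}\norm{z-\Delta}^2 \to 0$ as $\e \to 0$ on each side, valid because $\sum_{z\in X_>}\norm{z-\Delta}^2<\infty$ and likewise for $Y$.

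Finally I would establish lower semicontinuity, $\cost(M^*)\le \liminf_n \cost(M_n)$, by truncating at scale $\e$ and grouping the cost into contributions \emph{touching} a point farther than $\e$ from $\Delta$ and the remaining contributions among points within $\e$ of $\Delta$. Only finitely many points lie in the first group and each lies in at most one pair, so $M_n$ realizes only finitely many configurations there; after the extraction the associated cost converges to that of $M^*$. The second group contributes, on either side, at most a constant times the vanishing tail above, uniformly in $n$. Letting $\e\to 0$ yields the inequality, and since $\dlt(X,Y)^2$ is the infimum we conclude $\cost(M^*)=\dlt(X,Y)^2$; extending $M^*$ to a bijection by diagonal projections gives the desired optimal $\ph$. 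The main obstacle is exactly this semicontinuity step, where cost can \emph{escape to the diagonal}: a fixed off-diagonal $y\in Y_>$ may be matched in $M_n$ to points $x$ whose indices run off to infinity (so $\norm{x-\Delta}\to 0$), and its contribution then migrates from the matched term to the unmatched term in the limit. The grouping above resolves this precisely by pinning that escaping cost to the \emph{fixed} far point $y$, along which $\norm{x-y}^2 \to \norm{y-\Delta}^2$ agrees with the limiting assignment in $M^*$.
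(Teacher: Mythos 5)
Your argument is correct, and it reaches the same destination as the paper by a recognizably parallel but not identical route. The paper does not prove Theorem~\ref{thm:existence} in isolation; its own version of this existence statement is Lemma~\ref{lem:limiting-bijection} together with Corollary~\ref{cor:geod-exist}, where a minimizing sequence of bijections $\Phi_k$ is compactified by applying Bolzano--Weierstrass and a diagonal argument to the \emph{geometric images} $\Psi^{YX}_k(\mathbf{y})$ and $\Psi^{XY}_k(\x)$ in $\R^2\times\N$, after which the limit bijection is assembled piecewise (the sets $A$, $B$, $Q$ there) and its bijectivity is verified claim by claim; the fact that the limit's cost equals the infimum is then asserted ``from the construction.'' You instead first quotient out the diagonal bookkeeping by reducing to partial matchings $M\subseteq X_>\times Y_>$ (with unmatched points sent to their projections), run the diagonal extraction on the $\{0,1\}$-valued indicators $\1[(x,y)\in M_n]$ rather than on points of $\R^2$, and then prove lower semicontinuity of the cost explicitly via the $\e$-truncation and the tail bound $\sum_{\norm{z-\Delta}\le\e}\norm{z-\Delta}^2\to 0$. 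What your route buys is a more elementary extraction (pigeonhole on binary sequences instead of Bolzano--Weierstrass) and an honest treatment of exactly the step the paper glosses over, namely why cost cannot be lost when partners escape to the diagonal; what the paper's route buys is the stronger conclusion of pointwise geometric convergence of the $\Phi_k$, which it needs again later to deduce Theorem~\ref{thm:char-II} from Theorem~\ref{thm:char-I}. One sentence of yours is locally overstated: since a far point may be matched in $M_n$ to any of infinitely many near-diagonal partners, the pairs touching $F_\e$ do \emph{not} range over finitely many configurations; but your closing paragraph supplies the correct repair (each value of the escaping partner occurs finitely often, so $\norm{w_n-\Delta}\to 0$ and $\liminf_n\norm{z-w_n}^2\ge\norm{z-\Delta}^2$, which is the inequality actually needed for semicontinuity), so this is a matter of phrasing rather than a gap.
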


The construction of the geodesics is as follows: given any $X, Y \in \dgmlpq$, let $\ph$ be an optimal bijection. For the time being, we ignore the $\N$ coordinates of the persistence diagrams. Write $\g(0) \defeq X$, $\g(1) \defeq Y$, and for any $t \in (0,1)$, 
\[\g(t) \defeq \set{ (1-t)x + t\ph(x) : x \in X}.\]

Regardless of the choice of $p,q \in [1,\infty]$, such a curve defines a geodesic (cf. Corollary \ref{cor:geod-exist}). Note that different choices of $p,q$ may lead to different bijections $\ph$ being optimal. We call any geodesic of this form a \emph{convex-combination geodesic}. Conversely, we refer to geodesics \emph{not} of this form as \emph{deviant} geodesics.
Returning to the question of dealing with the indexing coordinate $\N$: recall that $\dlpq$ is blind to this coordinate, so we can define the convex-combination geodesic $\gamma$ in the following manner and still maintain continuity:
\[ \g(t) \defeq \set{ [(1-t)x_1 + t\ph(x)_1, (1-t)x_2 + t\ph(x)_2, x_3]^T : x=[x_1,x_2,x_3]^T \in X} \text{ for } t\in [0,1),\]
and $\g(1) \defeq Y$. In other words, the indexing coordinate stays constant for $t \in [0,1)$, and switches to the appropriate coordinate at $t =1$.

We will occasionally discuss \emph{branching geodesics}. A geodesic $\g:[0,1] \r \dgmlpq$ \emph{branches} at $t_0 \in (0,1)$ if there exists a geodesic $\widetilde{\g}: [0,1] \r \dgmlpq$ such that $\widetilde{\g}$ agrees with $\g$ on $[0,t_0]$, and is distinct from $\g$ on $(t_0,t_0+\e]$ for some $\e > 0$.

With this terminology, we now pose the main question motivating this paper.

\begin{question} 
\label{q:convex-comb}
For which pairs $(p,q)$ can we say that all geodesics in $\dgmlpq$ have the form of convex-combination geodesics?
\end{question}

Our first result shows that setting $p=\infty$ simultaneously produces branching and deviant geodesics in $\dgmlpq$. In particular, the existence of branching geodesics implies that $(\dgmlpq,\dlpq)$ for $q \in [1,\infty]$, $p = \infty$ cannot have nonnegative Alexandrov curvature (\cite[Chapter 10]{burago}).

\begin{theorem} 
\label{thm:branch-dev-infty}
Let $p = \infty, \, q \in [1,\infty]$. There exist infinite families of both branching and deviant geodesics in $(\dgmlpq,\dlpq)$.
\end{theorem}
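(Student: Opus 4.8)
The plan is to exploit the defining feature of the case $p=\infty$: since the matching cost is a supremum, distinct off-diagonal points may travel along independent ``schedules'' as long as no single point ever moves faster than the bottleneck speed. Concretely, I would fix a large $N$ and set
\[
X = \set{(0,\, 2N),\ (3N,\, 4N)} \cup \Delta^\infty, \qquad Y = \set{(0,\, 2N+2),\ (3N,\, 4N+1)} \cup \Delta^\infty,
\]
(suppressing the indexing coordinate, to which $\dlpq$ is blind). The two off-diagonal points are separated horizontally by $3N$ and sit at height $\Theta(N)$ above the diagonal, while the prescribed moves are purely vertical, of sizes $2$ and $1$. Since $\norm{(0,h)}_q = |h|$ and $\norm{(v_1,v_2)}_q \ge |v_1|$ for every $q \in [1,\infty]$, all the estimates below are uniform in $q$; this is exactly what allows one argument to cover the entire range of $q$.

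First I would compute $\dlpq(X,Y)$ and pin down the optimal bijection. For $N$ large, any bijection that does not match the two off-diagonal points of $X$ \emph{directly} to their vertical partners in $Y$ must either cross-match (horizontal cost $\ge 3N$) or route some point through the diagonal (cost $\ge N$), and both options exceed the direct cost $\max(2,1)=2$. Hence the direct vertical matching is the unique optimal bijection and $\dlpq(X,Y)=2$; the associated convex-combination geodesic moves both points linearly, so that the slow point traverses its segment at constant speed, i.e. its height is $4N + t$. Next I would produce the family of geodesics: for any continuous, $2$-Lipschitz $g:[0,1]\to\R$ with $g(0)=0$ and $g(1)=1$, define
\[
\gamma_g(t) \defeq \set{(0,\, 2N+2t),\ (3N,\, 4N + g(t))} \cup \Delta^\infty .
\]
The same separation estimate shows that for all $s,t$ the optimal matching between $\gamma_g(s)$ and $\gamma_g(t)$ is again the direct one, whence
\[
\dlpq(\gamma_g(s),\gamma_g(t)) = \max\big(2|t-s|,\ |g(t)-g(s)|\big) = 2|t-s|,
\]
the last equality because $g$ is $2$-Lipschitz while the fast point alone already realizes speed $2$. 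Thus every $\gamma_g$ is a geodesic.

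It then remains to harvest the two phenomena. For \emph{deviance}: choosing $g$ to be any non-affine admissible schedule — there are uncountably many, precisely because the straight line $g(t)=t$ has slope $1 < 2$ and so leaves ``slack'' for the slow point — yields a geodesic differing from the unique convex-combination geodesic at some parameter value, hence deviant; this gives the infinite family. For \emph{branching}: fix any $t_0 \in (0,1)$ and exhibit two admissible schedules that agree on $[0,t_0]$ but disagree on $(t_0,t_0+\e]$, again possible since the slope required to reach $(1,1)$ from any interior point is strictly below the Lipschitz bound $2$; then $\gamma_{g_1}$ branches at $t_0$, and varying $t_0$ (or the branch) produces the infinite family of branching geodesics.

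The main obstacle, and the only place demanding genuine care, is the rigorous evaluation of the bottleneck distance in the two displays: one must confirm that the infimum over \emph{all} bijections — including those exploiting the infinitely many diagonal copies — cannot undercut the direct matching. I would settle this with the uniform lower bound that any point near $x=0$ matched to anything other than its direct partner contributes at least $\min(3N,\, N) \gg 2$ to the supremum, so every non-direct bijection costs strictly more than $2|t-s|$; combined with the direct matching as a matching upper bound, this fixes the distance exactly. The remaining verifications — that each $\gamma_g$ lies in $\dgmlpq$ and is continuous — are routine.
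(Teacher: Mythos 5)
Your proof is correct, and it rests on the same mechanism as the paper's — the sup in $C_\infty[l^q]$ leaves ``slack'' for any point whose required displacement is strictly less than the bottleneck value, so that point may follow any sufficiently Lipschitz schedule — but the concrete construction is genuinely different. The paper realizes the slack by sending points to or from the diagonal: its branching example consists of two geodesics into the empty diagram whose small points collapse onto the diagonal at triple speed and then vanish, and its deviant example features a point that emerges from the diagonal and returns to it, so that \emph{no} bijection between the endpoints supports the geodesic. Your construction instead keeps both off-diagonal points far from the diagonal and far from each other, which buys you two things: all diagonal bookkeeping reduces to the single uniform estimate that any non-direct matching costs $\Omega(N)$ (your stated lower bound of $N$ for routing through the diagonal should really be $2^{(1/q)-1}N\ge N/2$, but this changes nothing), and your branching geodesics branch at an arbitrary interior time $t_0$ while sharing \emph{both} endpoints, whereas the paper's branch only at $t=1/3$ and have distinct endpoints. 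The trade-off is that your deviant geodesics are weaker in kind: each $\gamma_g$ is still supported by the (unique) optimal bijection, with points merely traversing their straight segments at non-constant speed, whereas the paper's $\omega$ exhibits the stronger phenomenon of a geodesic with no supporting bijection at all. All the estimates you defer — uniqueness of the direct matching, the identity $\dlpq(\gamma_g(s),\gamma_g(t))=2|t-s|$, membership in $\dgmlpq$ — do go through exactly as you indicate, so the argument is complete.
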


\begin{remark}
\cite{turner2013means} showed---via a direct examination of an inequality characterizing Alexandrov curvature---that in the case $p = q \in [1,2) \cup (2,\infty]$, $\dgmlpq$ does not have nonnegative Alexandrov curvature. 
\end{remark}

We collect another related result for the case $p =q =1$: 
\begin{theorem}
\label{thm:branch-dev-one}
Let $q = p =1$. There exist infinite families of branching and deviant geodesics in $(\dgmlpq,\dlpq)$.
\end{theorem}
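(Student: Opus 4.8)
The plan is to exploit the non-uniqueness of geodesics in the taxicab plane $(\R^2,\norm{\cdot}_1)$ and transfer this non-uniqueness into $\dgm_1[l^1]$ using the simplest possible diagrams, so that $d_1[l^1]$ collapses to the planar $l^1$ distance of a single moving point. First I would fix two off-diagonal points $a=(0,10)$ and $b=(2,12)$ and set $X\defeq\{(a,1)\}\cup\Delta^\infty$ and $Y\defeq\{(b,1)\}\cup\Delta^\infty$, writing $(a,1)$ for the point with geometric component $a$ and index $1$. The key preliminary observation is that for the $l^1$ ground metric the cost of matching an above-diagonal point $u=(u_1,u_2)$ to the diagonal is exactly its lifespan $u_2-u_1$, since $\min_r(|u_1-r|+|u_2-r|)=u_2-u_1$. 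Because $a,b$ have lifespan $10$ while $\norm{a-b}_1=4$, every off-diagonal point I will use lies in the box $[0,2]\times[10,12]$ and hence has lifespan at least $8$; so for any two single-point diagrams with off-diagonal points $u,v$ in this box, the bijection matching $u\leftrightarrow v$ (cost $\norm{u-v}_1\le 4$) always beats the only competing bijection, which sends both points to the diagonal (cost $\ge 8+8$). Thus, restricted to such diagrams, $d_1[l^1]$ reduces to the $l^1$ distance between the two off-diagonal points, and this is where the mechanism differs from the $p=\infty$ case of Theorem~\ref{thm:branch-dev-infty}: here the multiplicity of geodesics comes from the ground metric rather than from free slack in an $l^\infty$ matching.

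Next I would produce the deviant family. For any corner $p=(p_1,p_2)\in[0,2]\times[10,12]$ the broken line $a\to p\to b$ is monotone in both coordinates and has total $l^1$-length $\norm{a-p}_1+\norm{p-b}_1=(p_1+p_2-10)+(2-p_1+12-p_2)=4=\norm{a-b}_1$. Parametrizing it at constant $l^1$-speed as $c_p:[0,1]\to\R^2_>$ and setting $\g_p(t)\defeq\{(c_p(t),1)\}\cup\Delta^\infty$, monotonicity gives $\norm{c_p(s)-c_p(t)}_1=|t-s|\cdot 4$ for all $s,t$, and by the previous paragraph $d_1[l^1](\g_p(s),\g_p(t))=\norm{c_p(s)-c_p(t)}_1$; hence each $\g_p$ is a geodesic. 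Since the unique optimal bijection matches $a\mapsto b$, the unique convex-combination geodesic between $X$ and $Y$ is the straight Euclidean segment, so every $\g_p$ with $p$ off the segment $\overline{ab}$ is deviant. These form a two-parameter, hence infinite, family.

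For the branching family I would instead fix a shared initial segment $a\to m$ with $m=(1,11)$, reached at $t_0=\tfrac12$, and vary only the continuation. For each corner $q\in[1,2]\times[11,12]$ the concatenation of $a\to m$ with the broken line $m\to q\to b$ is again monotone and of total length $4$; the resulting constant-speed geodesics $\g_q$ all agree on $[0,\tfrac12]$ (where they trace $a\to m$) and are pairwise distinct on $(\tfrac12,1]$. This realizes an infinite family of geodesics branching at $t_0=\tfrac12$.

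The main obstacle is not the construction but its justification: one must check that the naive off-diagonal matching is optimal uniformly along the entire curve, not merely for nearby parameters, so that $d_1[l^1]$ genuinely collapses to the planar $l^1$ distance, and that the broken-line paths are monotone so that this $l^1$ distance is additive along the curve. Both are arranged by the quantitative gap between the lifespans (at least $8$) and the diameter of the configuration (at most $4$), which I would isolate as a short lemma. A final routine check is that each $\g_p$ and $\g_q$ stays in $\dgm_1[l^1]$, which is immediate since every intermediate diagram has a single off-diagonal point.
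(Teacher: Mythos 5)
Your construction is correct, and the underlying mechanism---non-uniqueness of geodesics in the taxicab plane, realized by monotone broken-line paths having the same $l^1$-length as the straight segment---is exactly the one the paper invokes (it remarks that ``curves with corners\ldots would also be geodesics by virtue of the ground metric being $l^1$''). But your configuration is genuinely simpler than the paper's. The paper works with two-point diagrams $X=\{(0,k),(1,k-1)\}$ and $Y=\{(1,k+1),(2,k)\}$ whose points travel along L-shaped paths meeting at $(1,k)$; its deviant example must therefore be compared against \emph{two} distinct optimal bijections (the $l^1$ ground metric makes the cross-matching optimal as well), and its branching family $\{\nu_r\}$ relies on the two points merging into a multiplicity-two point at $t=\tfrac12$ and then continuing along different monotone continuations. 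Your single-point diagrams collapse $d_1[l^1]$ outright to the planar $l^1$ distance between the lone off-diagonal points---justified by the lifespan-versus-diameter gap ($\geq 8$ vs.\ $\leq 4$), which you correctly identify as the one fact requiring a lemma---so the optimal bijection is unique, deviance is immediate, and branching is achieved purely by the choice of corner after the shared initial segment, with no merging of points needed. What the paper's version buys in exchange for the extra bookkeeping is a demonstration that deviant geodesics can coexist with several optimal bijections and that multiplicity can change along a geodesic; what yours buys is a clean reduction to a one-point problem in which every verification is a short computation in $(\R^2,\norm{\cdot}_1)$. Both yield the required infinite families, so your argument is a valid alternative proof.
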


Theorems \ref{thm:branch-dev-infty} and \ref{thm:branch-dev-one} serve to make Question \ref{q:convex-comb} more interesting. The next result is the finite version of our answer to Question \ref{q:convex-comb}.

\begin{theorem}[Characterization of geodesics I]
\label{thm:char-I}
Fix $p,q$ in the following ranges:
\begin{itemize}
\item $p = q \in [2,\infty),$
\item $q = 2$, $p \in (1,\infty).$
\end{itemize}
Let $X, Y \in \dgmlpq$ be diagrams having finitely many points outside the diagonal, and let $\mu:[0,1] \r \dgmlpq$ be a geodesic from $X$ to $Y$. Then there exists a convex-combination geodesic $\g:[0,1] \r \dgmlpq$ from $X$ to $Y$ such that for each $t \in [0,1]$, we have $\dlpq(\g(t),\mu(t)) = 0$. 
\end{theorem}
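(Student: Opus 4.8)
The plan is to follow the strategy of Sturm: exploit strict convexity to force the ``strands'' of an arbitrary geodesic to travel along straight segments, so that the geodesic is reconstructed from an optimal bijection. Throughout I work modulo the distance-zero equivalence (i.e. with the geometric components only, forgetting the indexing coordinate, which $\dlpq$ ignores), and I write $L \defeq \dlpq(X,Y)$. Because $X$ and $Y$ have finitely many off-diagonal points, say $m$ and $n$ respectively, a bijection $X \to Y$ is determined up to irrelevant choices by a finite combinatorial object (which off-diagonal point goes to which, and which go to the diagonal); in particular there are only finitely many optimal bijections up to distance zero, optimal bijections exist (the assignment problem is finite), and by the forthcoming lemma applied with endpoints $0,1$ every $\mu(u)$ has at most $m+n$ off-diagonal points.

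The engine is the following three-point lemma. Fix $0 \le s < u < t \le 1$ and optimal bijections $\a:\mu(s)\to\mu(u)$ and $\b:\mu(u)\to\mu(t)$. Writing $a_w = \|w-\a(w)\|_q$, $b_w = \|\a(w)-\b\a(w)\|_q$ and $c_w = \|w-\b\a(w)\|_q$ for $w\in\mu(s)$, the geodesic identities give
\[
(t-s)L = \dlpq(\mu(s),\mu(t)) \le \bigl\| (c_w)\bigr\|_{\ell^p} \le \bigl\|(a_w+b_w)\bigr\|_{\ell^p} \le \bigl\|(a_w)\bigr\|_{\ell^p} + \bigl\|(b_w)\bigr\|_{\ell^p} = (u-s)L + (t-u)L,
\]
so every inequality is an equality. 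The first forces the cost of $\b\a$ to equal $\dlpq(\mu(s),\mu(t))$, i.e. $\b\a$ is optimal; the middle forces $c_w = a_w+b_w$ pointwise, which by strict convexity of $\|\cdot\|_q$ puts $\a(w)$ on the segment $[w,\b\a(w)]$; and equality in Minkowski, by strict convexity of $\ell^p$, forces $(a_w)$ and $(b_w)$ to be proportional with ratio fixed by $\bigl\|(a_w)\bigr\|_{\ell^p}:\bigl\|(b_w)\bigr\|_{\ell^p} = (u-s):(t-u)$. Hence $\a(w)=\tfrac{t-u}{t-s}\,w+\tfrac{u-s}{t-s}\,(\b\a)(w)$. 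The two strict-convexity inputs hold exactly when $q\in(1,\infty)$ and $p\in(1,\infty)$, which is precisely why the stated ranges $p=q\in[2,\infty)$ and $q=2,\,p\in(1,\infty)$ appear (and why $p=1,\infty$ are excluded, matching Theorems \ref{thm:branch-dev-infty}--\ref{thm:branch-dev-one}).

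Taking $s=0$, $t=1$ shows that for every $u\in(0,1)$ there is an optimal bijection $\psi_u:X\to Y$ with $\mu(u)=\set{(1-u)x+u\,\psi_u(x):x\in X}$ up to distance zero; that is, $\mu(u)$ agrees geometrically with the convex-combination geodesic $\g_{\psi_u}$ at time $u$. It remains to produce a single optimal bijection valid for all $u$, and this is the main obstacle, since optimal bijections --- and hence convex-combination geodesics --- need not be unique. Let $\g_1,\dots,\g_r$ be the finitely many geometrically distinct convex-combination geodesics through optimal bijections. For $i\neq j$ the condition $\g_i(u)=\g_j(u)$ amounts, for each of the finitely many pairings of strands, to affine equations in $u$, each either identically satisfied or satisfied at a single value; since $\g_i,\g_j$ are geometrically distinct no pairing is identically satisfied, so the bad set $B=\bigcup_{i<j}\set{u:\g_i(u)=\g_j(u)}$ is finite. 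On each open interval of $[0,1]\setminus B$ the $\g_i$ are pairwise distinct, so $u\mapsto\mu(u)$ selects exactly one index, and by continuity the selection sets are relatively closed and disjoint; connectedness forces the index to be constant on each interval. Finally, across a bad time $t_0$ where the selected geodesic would switch from $\g_i$ to $\g_j$, I apply the lemma to the triple $(s,t_0,t)$ with $s,t$ on either side: it forces each strand of $\mu$ to pass straight through $\mu(t_0)$, so the incoming $\g_i$-segment and outgoing $\g_j$-segment of each strand splice into a single straight trajectory. Inducting over the finitely many bad times, each strand is one straight segment from a point of $X$ to a point of $Y$; this endpoint assignment is a bijection $\psi$, optimal because $\dlpq(X,Y)=L$ is attained along it, with $\mu=\g_\psi$ up to distance zero on all of $[0,1]$. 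Reinstating the indexing coordinate via the paper's convention completes the proof.
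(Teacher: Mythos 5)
Your route is genuinely different from the paper's. The paper works quantitatively: it fixes a dyadic partition, composes optimal bijections between consecutive $\mu(i2^{-k})$ into a single $\Phi_k:X\r Y$, and uses the Clarkson/BCL inequalities (uniform convexity with an explicit defect term $-Ct(1-t)\lnorm Q-R\rnorm^p$) together with Jensen's inequality to force the defect to vanish at every dyadic time simultaneously; a pigeonhole on the finitely many $\Phi_k$ then yields one bijection valid on a dense set. You instead work qualitatively, extracting the equality cases of the triangle and Minkowski inequalities at a single triple $s<u<t$; with $s=0,\,t=1$ this already pins $\mu(u)$ to \emph{some} convex-combination geodesic, and what remains is combinatorial patching over the finitely many optimal bijections. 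Your three-point lemma is correct, and notably it only needs strict convexity of the $q$-norm on $\R^2$ and of $\ell^p$, i.e. $p,q\in(1,\infty)$ --- so if the patching were airtight your argument would settle cases the paper explicitly leaves open (e.g. $p=q\in(1,2)$). That is either a genuine improvement or a signal that the patching deserves scrutiny.

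The step that is not yet a proof is the splice at a bad time $t_0$. Your lemma applied to $(s,t_0,t)$ produces \emph{some} optimal bijections $\a:\mu(s)\r\mu(t_0)$ and $\b:\mu(t_0)\r\mu(t)$ with the collinearity property; by itself it says nothing about ``the strands of $\mu$,'' because $\a$ need not be the strand map of $\g_i$ (optimal bijections between $\g_i(s)$ and $\g_i(t_0)$ need not be unique), and likewise for $\b$. To conclude that the incoming and outgoing segments splice, you must first show that for $s,t$ sufficiently close to $t_0$ every optimal bijection $\mu(s)\r\mu(t_0)$ (resp. $\mu(t_0)\r\mu(t)$) matches each off-diagonal point to its geometrically nearest point and hence agrees with the strand map up to distance zero. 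This does follow from a separation argument --- the finitely many off-diagonal points of $\mu(t_0)$ are at mutual distance $\geq\delta>0$ and at distance $\geq\delta$ from $\Delta$, the cost of an optimal bijection is $(t_0-s)L\r 0$, and no strand with an off-diagonal endpoint can meet $\Delta$ at an interior time --- and with that supplement the collinearity forces $\Phi_j(x')-x'=\Phi_i(x)-x$ and then $x'=x$, so no genuine switch occurs. But as written the splicing is an assertion, not an argument, and you should also address multiple points of $\mu(t_0)$ (two strands crossing at $t_0$), where incoming and outgoing directions are identified only as multisets. Everything else --- existence and finiteness of optimal bijections, the bound of $m+n$ off-diagonal points of $\mu(u)$, finiteness of the bad set, and the connectedness argument on each component --- is sound.
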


This result in fact generalizes to the setting of countably-many off-diagonal points.

\begin{theorem}[Characterization of geodesics II]
\label{thm:char-II}
Fix $p,q$ in the following ranges:
\begin{itemize}
\item $p = q \in [2,\infty),$
\item $q = 2$, $p \in (1,\infty).$
\end{itemize}
Let $X, Y \in \dgmlpq$, and let $\mu:[0,1] \r \dgmlpq$ be a geodesic from $X$ to $Y$. Then there exists a convex-combination geodesic $\g:[0,1] \r \dgmlpq$ from $X$ to $Y$ such that for each $t \in [0,1]$, we have $\dlpq(\g(t),\mu(t)) = 0$. 
\end{theorem}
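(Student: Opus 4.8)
The plan is to reduce Theorem~\ref{thm:char-II} (countably many off-diagonal points) to Theorem~\ref{thm:char-I} (finitely many off-diagonal points) by a truncation-and-limit argument, exploiting the fact that diagrams in $\dgmlpq$ concentrate their mass near the diagonal. First I would fix $X,Y \in \dgmlpq$ and a geodesic $\mu$ between them, and for each $n \in \N$ define truncated diagrams $X^{(n)}, Y^{(n)}$ obtained by replacing every off-diagonal point at distance less than $1/n$ from $\Delta$ with its diagonal projection. Since $X,Y \in \dgmlpq$ with $p < \infty$, the definition of $\dgmlpq$ forces $\sum_{x \in X} \norm{x - \Delta}_q^p < \infty$ (only finitely many points sit outside any band of width $\e$), so $X^{(n)}$ and $Y^{(n)}$ have finitely many off-diagonal points and $\dlpq(X^{(n)}, X) \to 0$, $\dlpq(Y^{(n)}, Y) \to 0$ as $n \to \infty$.

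Next I would restrict the given geodesic $\mu$ to a ``truncated geodesic'' $\mu^{(n)}$ and apply Theorem~\ref{thm:char-I} to each finite piece to obtain a convex-combination geodesic $\g^{(n)}$ with $\dlpq(\g^{(n)}(t), \mu^{(n)}(t)) = 0$ for all $t$. Each $\g^{(n)}$ is realized by an optimal bijection $\ph^{(n)}$ between $X^{(n)}$ and $Y^{(n)}$ via linear interpolation. The heart of the argument is then a compactness/diagonalization step: I would extract a subsequence of the optimal bijections $\ph^{(n)}$ that converges, point-by-point on the off-diagonal geometric components, to a bijection $\ph: X \to Y$, using the local finiteness of $\dgmlpq$ diagrams to control the matchings near the diagonal. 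Here the $\N$-indexing component built into Definition~\ref{def:pdgm} is exactly what lets me speak of pointwise convergence of matchings without ambiguity from repeated geometric coordinates. I would then verify that the limiting $\ph$ is itself optimal for $\dlpq(X,Y)$ and that its convex-combination geodesic $\g$ satisfies $\dlpq(\g(t), \mu(t)) = 0$ for every $t$, by passing the equality $\dlpq(\g^{(n)}(t), \mu^{(n)}(t)) = 0$ to the limit and invoking continuity of $\dlpq$ together with the triangle inequality.

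The main obstacle I anticipate is the limiting step: ensuring that the sequence of optimal bijections does not ``escape to infinity'' or send mass off toward the diagonal in a way that prevents convergence to a genuine bijection $\ph: X \to Y$. Because there can be countably many off-diagonal points accumulating on the diagonal, I cannot appeal to a naive finite compactness argument; instead I would need a uniform tail estimate showing that, for the optimal matchings, the contribution to the cost from points within $1/n$ of the diagonal vanishes uniformly in $n$. This tail control, combined with the summability $\sum_{x} \norm{x-\Delta}_q^p < \infty$, should let me apply a diagonal extraction and a dominated-convergence-type argument to conclude both that $\ph$ is a well-defined bijection and that it is optimal. A secondary technical point is checking that the limit geodesic $\g$ passes through the correct intermediate points at each fixed $t$ simultaneously for all $t \in [0,1]$; since each $\g^{(n)}$ is a linear interpolation determined entirely by its endpoints and bijection, pointwise convergence of the $\ph^{(n)}$ at the endpoints should propagate to uniform-in-$t$ convergence of the interpolants, making this step routine once the optimal bijection limit is secured.
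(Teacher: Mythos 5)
Your reduction has a genuine gap at its first step: the ``truncated geodesic'' $\mu^{(n)}$ is not a geodesic, so Theorem~\ref{thm:char-I} cannot be applied to it. Truncating each diagram $\mu(t)$ by collapsing points within $1/n$ of the diagonal does not produce a curve satisfying $\dlpq(\mu^{(n)}(s),\mu^{(n)}(t)) = |t-s|\,\dlpq(X^{(n)},Y^{(n)})$: the distance between the truncated endpoints is in general not the truncation of $\dlpq(X,Y)$, and a point of $\mu$ can cross the $1/n$ band at an intermediate time (e.g.\ travelling from distance $2/n$ above $\Delta$ in $X$ to distance $1/(2n)$ in $Y$), so the truncated curve has points that move partway and then jump to the diagonal. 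Since the whole point of the theorem is that we do \emph{not} yet know $\mu$ decomposes along an optimal bijection, you cannot assume the near-diagonal points split off additively from the rest of the cost. Making the truncation step rigorous would require a separate stability result for geodesics under truncation, which is neither routine nor obviously true, and without it the objects $\g^{(n)}$ you want to pass to the limit are never produced.

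The paper avoids truncation entirely. The key observation is that Part~I of the proof of Theorem~\ref{thm:char-I} --- the dyadic construction of bijections $\Phi_k : X \r Y$ by composing optimal bijections between consecutive slices $\mu(i2^{-k})$ and $\mu((i+1)2^{-k})$, together with the Clarkson/BCL estimates showing each $\Phi_k$ is optimal and its convex-combination curve agrees with $\mu$ at the dyadic times --- nowhere uses finiteness of the diagrams; optimal bijections between arbitrary elements of $\dgmlpq$ exist by Corollary~\ref{cor:geod-exist}. Finiteness enters only in Part~II, where a pigeonhole argument extracts a single bijection from the sequence $(\Phi_k)_k$. For countable diagrams that step is replaced by Lemma~\ref{lem:limiting-bijection}, which extracts a pointwise-convergent subsequence of the $\Phi_k$ with optimal limit $\Phi_*$; continuity then gives $\dlpq(\g_*(t),\mu(t))=0$ for all $t$. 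The diagonalization-with-tail-control you describe in your second and third paragraphs is essentially the content of that lemma, but in the paper it is applied to the sequence of bijections $X \r Y$ built from $\mu$ itself, not to bijections between truncated surrogates. If you redirect your compactness argument to that sequence, your proof collapses to the paper's.
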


These characterization theorems have the following interpretation. Suppose we are given $X, Y \in \dgmlpq$ for the specified choices of $p$ and $q$ and a geodesic $\mu$ from $X$ to $Y$. Then $\mu$ is a convex-combination geodesic for some optimal bijection $\ph:X \r Y$. Furthermore, for each $x \in X$, we obtain a straight-line path $\gamma_x$ from $x$ to $\ph(x)$. By the construction of convex-combination geodesics, any optimal bijection produces a geodesic; these theorems assert the \emph{inverse} result that any geodesic comes from an optimal bijection, at least for the prescribed choices of $p$ and $q$.

\subsection{Recasting diagram metrics as $l^p$ norms and OT problems}
\label{sec:OT}

A key property of persistence diagrams is that the diagonal is counted with infinite multiplicity; this geometric trick ensures that bijections are always possible, and hence the $\dlpq$-type distances are always defined. While a persistence diagram contains uncountably many points according to Definition \ref{def:pdgm}, only countably many points are actually ever involved in computing a $\dlpq$-type distance. Specifically, we can view $\dlp$ as an $l^p$ norm. To see this, let $X,Y \in \dgmlp$. Recall that $X_>, Y_>$ consists of the (countably many) off-diagonal points of $X$ and $Y$. Define the $^*$ operation as the following:
\begin{align*}
X^* := X_> \cup \{ \pi_{\Delta}(y) : y \in Y_>\} \cup \Delta_\Q^\infty, \qquad \text{for } X \in \dgmlpq.
\end{align*}
The multiset $X^*$ consists of the off-diagonal points of $X$, a copy of the diagonal projection for each off-diagonal point of $Y$, and the rational diagonal points counted with countably infinite multiplicity. To ease the notation, we did not specify the indexing coordinates for the points in $\{ \pi_{\Delta}(y) : y \in Y_>\}$, but it is to be understood that the indices are chosen such that multiple off-diagonal points with the same diagonal projection are mapped to different slots in $\N$. The idea of including the rationals on the diagonal is the following: the set in the middle contains redundancies, so when obtaining matchings, it may be the case that the redundant diagonal points in $X_> \cup \{ \pi_{\Delta}(y) : y \in Y_>\}$ have to get matched to diagonal points in $Y_> \cup \{ \pi_{\Delta}(x) : x \in X_>\}$. By including rational points on the diagonal, we ensure (by the density of the rationals) that this matching of diagonal points contributes zero cost.

In particular, $X^*$ is a countable set (perhaps invoking the axiom of countable choice as necessary). Fix an enumeration $X^*= \{x^1,x^2,\ldots \}$. Then we think of $X^*$ as the map $X^*:\N \r \C$ given by $i \mapsto x_i  \mapsto \pir(x_i)$. 
Next define $Y^*$ analogously, and consider any bijection $\ph:X^* \r Y^*$. We again treat $\ph(X^*)$ as an infinite-dimensional vector, i.e, a map $\ph(x): \N \r \C$ given by $i\mapsto \ph(x_i)  \mapsto \pir(\ph(x_i))$. Here we are using the canonical identification of $\C$ with $\R^2$. 

Next we introduce some cost functions. Let $p,q \in [1,\infty]$, and let $X,Y \in \dgmlpq$. Define the following functional for a bijection $\ph:X \r Y$:
\begin{align}
C_p[l^q](\ph):= \begin{cases}
\lp \sum_{x\in X} \norm{x - \ph(x)}_q^p \rp^{1/p} &: p \in [1,\infty)\\
\sup_{x\in X} \norm{x - \ph(x)}_q &: p = \infty.
\end{cases}
\label{eq:Cp-defn}
\end{align} 
When $q = 2$, we reduce notation and simply write $C_p$ instead of $C_p[l^2]$. 

For the next few definitions, we fix $q=2$ and consider $X,Y \in \dgmlp$. Now for $p \in [1,\infty)$, consider the functional
\begin{align}
J_p(\ph) := \norm{ X^* - \ph(X^*) }_{l^p},
\label{eq:dgm-norm}
\end{align}
where the $l^p$-norm is given as
\begin{align*}
\norm{X^* - \ph(X^*) }_{l^p} := \medlp \sum_{i \in \N} | x_i - \ph(x_i) |^p \medrp^{1/p} = \medlp \sum_{i \in \N} \norm{ x_i - \ph(x_i) }_2^p \medrp^{1/p}
\end{align*}
if the sum converges, and as $\infty$ otherwise. Note that by our choice of $X,Y \in \dgmlp$, there always exists $\ph$ such that the preceding sum converges. For such $\ph$, the vector $X^* - \ph(X^*)$ belongs to $l^p$.
Here also recall from Definition \ref{def:pdgm} that $\norm{x - y}_p = \norm{ \pir(x) - \pir(y)}_p$. Each summand is an absolute value, i.e. a Euclidean norm, that is raised to the $p$th power. 
The $l^\infty$ norm is likewise defined as 
\[ \norm{x - \ph(x) }_{l^\infty} : = \sup_{i\in \N} \norm{x_i - \ph(x_i)}_2.\]

\begin{definition}
For any bijection $\ph':X \r Y$, define $\Lambda_{\ph'}$ to be the collection of bijections $\ph:X^* \r Y^*$ agreeing with $\ph'$ on off-diagonal points of $X$ and $Y$. 
\end{definition}

By the construction of $X^*, Y^*$ and the density of the rationals, we have 
\begin{align}
C_p(\ph')  = \medlp \sum_{x \in X} \norm{x - \ph'(x)}_2^p \medrp^{1/p} = \inf \{ J_p(\ph) : \ph \in \Lambda_{\ph'} \} \, \text{for any bijection $\ph':X \r Y$}.
\label{eq:bijection-approx}
\end{align}
In particular, the matching cost of $\ph \in \Lambda_{\ph'}$ differs from that of $\ph'$ only in how it produces a matching among the ``redundant" points on the diagonal.

Finally we observe that for all $p \in [1,\infty]$,
\[\dlp(X,Y) = \inf \{C_p(\ph) : \ph:X \r Y \text{ a bijection} \} = \inf \{J_p(\ph) : \ph:X^* \r Y^* \text{ a bijection} \}.\]
Here we are using the following observations: (1) any $\ph$ infimizing $C_p$ does not move diagonal points unnecessarily, and (2) any $\ph$ infimizing $J_p$ agrees with a $C_p$ infimizer on off-diagonal points and incurs zero cost for infinitesimally ``sliding" points along the diagonal.

\begin{remark} The distinction between $J_p$ and $C_p$ is that $J_p$ is an $l^p$ norm. This reformulation allows us to use powerful $l^p$ space inequalities to produce results for $\dlp$. It is not clear to us if this approach can be extended to $\dlpq$ for $q \neq 2$; attempting to prove one of the inequalities we need (Clarkson's inequality, Lemma \ref{lem:clarkson}) with $q \neq 2$ leads to some difficulty.  
\end{remark}

At least in the case of diagrams having finitely many off-diagonal points, one could similarly reformulate a $\dlp$ distance as an \emph{optimal transportation} (OT) problem. This idea is used below, where we describe a method (\cite{lacombe2018large}) for recasting the computation of a diagram metric as an OT problem. 

Given appropriately defined measures $\mu, \nu$ on measure spaces $X$ and $Y$, we write $\coup(\mu,\nu)$ to denote the collection of all \emph{coupling measures}, i.e. measures $\g$ on $X\times Y$ with marginals $\mu$ and $\nu$.

\begin{definition}
Following \cite{lacombe2018large}, we let $\Deltapt$ denote a virtual point representing the diagonal. We also use the notation $\R^{2\bullet}:= \R^2 \cup \{\Deltapt\}$ (and resp. $\R^{2\bullet}_>:= \R^2_> \cup \{\Deltapt\}$). For $x \in \R^2$, we use the notation $\lnorm x - \Deltapt \rnorm_p$ to denote $\lnorm x - \pi_{\Delta}(x) \rnorm_p$. We also set $\lnorm \Deltapt - \Deltapt \rnorm_p = 0$.  
\end{definition}

Let $X, Y \in \dgmlp$, $p \in [1,\infty]$, be diagrams having finitely many off-diagonal points. Let $n_X := |X_>|, \, n_Y:= |Y_>|$, and set $n:= n_X + n_Y$. Then we define:
\begin{align*}
X^\bullet &:= X_> \cup \{ (\Deltapt, j) : 1\leq j \leq n_Y \} \subseteq \R^{2\bullet}_> \times \N, \\
Y^\bullet &:= Y_> \cup \{ (\Deltapt, j) : 1\leq j \leq n_X \} \subseteq \R^{2\bullet}_> \times \N. 
\end{align*}

Then $n = |X^\bullet| = |Y^\bullet|$. Given arbitrary measures $\mu_{X^\bullet},\mu_{Y^\bullet}$ on $X^\bullet$ and $Y^\bullet$, respectively, and a coupling measure $\g \in \coup(\mu_{X^\bullet},\mu_{Y^\bullet})$ (i.e. a transport plan), the $L^p[l^2]$ transport cost is defined as:
\begin{align*} 
T_p(\g) :=  \lnorm \pi_{X^\bullet} - \pi_{Y^\bullet} \rnorm_{L^p(\g)}  
&=  \lp \int_{X^\bullet\times Y^\bullet} | \pi_{X^\bullet}(x,y) - \pi_{Y^\bullet}(x,y) |^p \,d\g(x,y) \rp^{1/p}\\
&= \lp \sum_{i,j} \lnorm x_i - y_j \rnorm_2^p \g(x_i,y_j) \rp^{1/p}.
\end{align*} 
Here $\pi_{X^\bullet} : X^\bullet\times Y^\bullet \r X^\bullet ,  \pi_{Y^\bullet}:X^\bullet\times Y^\bullet \r Y^\bullet$ are the canonical projection maps. More specifically, by taking the canonical identification of $\R^2$ with $\C$, these are maps $X^\bullet \times Y^\bullet \r \C$, so we are able to view them as maps in the $L^p$ space of complex-valued measurable functions. Measurability holds because these maps, being defined on discrete spaces, are trivially continuous. The absolute value in the integrand is taken for complex numbers, i.e. it corresponds to the Euclidean norm. The $l^2$ ground norm is the canonical choice when working over an $L^p$ space.

Next let $\mu_{X^\bullet}:= \sum_{i=1}^n \d_{x_i}$ denote the uniform measure on $X^\bullet$, and similarly let $\mu_{Y^\bullet}$ denote the uniform measure on $Y^\bullet$. Then we have (see also \cite[Proposition 1]{lacombe2018large}):

\begin{proposition}
\label{prop:OT}
Given $X,Y, X^\bullet,Y^\bullet$ as above, we have the following identity:
\[ \dlp(X,Y) =  \inf_{\g \in \coup(\mu_{X^\bullet},\mu_{Y^\bullet})} T_p(\g).\]
\end{proposition}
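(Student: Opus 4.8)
The plan is to recognize the right-hand side as a linear (for $p<\infty$) optimization over the Birkhoff polytope, reduce it to an optimization over bijections via Birkhoff--von Neumann, and then match those bijections with the persistence-diagram matchings that compute $\dlp(X,Y)$. First I would identify a coupling $\g \in \coup(\mu_{X^\bullet},\mu_{Y^\bullet})$ with its matrix $(\g(x_i,y_j))_{i,j} \in \R^{n\times n}$. Since $\mu_{X^\bullet}$ and $\mu_{Y^\bullet}$ each assign unit mass to their $n$ points, the two marginal constraints say precisely that this matrix has every row- and column-sum equal to $1$, i.e. that it is doubly stochastic. Thus $\coup(\mu_{X^\bullet},\mu_{Y^\bullet})$ is affinely the Birkhoff polytope of doubly stochastic $n\times n$ matrices: a compact convex set whose extreme points are, by the Birkhoff--von Neumann theorem, exactly the permutation matrices, each of which corresponds to a bijection $\ph:X^\bullet \r Y^\bullet$ with transport cost $T_p(\ph) = \big(\sum_i \norm{x_i - \ph(x_i)}_2^p\big)^{1/p}$.

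\textbf{Reduction to bijections.} For $p \in [1,\infty)$, the map $\g \mapsto T_p(\g)^p = \sum_{i,j}\norm{x_i - y_j}_2^p\,\g(x_i,y_j)$ is linear in $\g$, so it attains its minimum over the polytope at an extreme point; as $t \mapsto t^{1/p}$ is increasing, the same permutation minimizes $T_p$ itself. For $p=\infty$ the functional $T_\infty$ is no longer linear, so there I would instead invoke the combinatorial core of Birkhoff--von Neumann (the K\"onig/Hall theorem that the support of a doubly stochastic matrix contains a full permutation): given any coupling $\g$, a permutation supported inside $\{(i,j):\g(x_i,y_j)>0\}$ yields a bijection whose bottleneck cost is at most $T_\infty(\g)$. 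In either case $\inf_\g T_p(\g) = \min\{T_p(\ph) : \ph:X^\bullet \r Y^\bullet \text{ a bijection}\}$, with the supremum replacing the sum when $p=\infty$.

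\textbf{Bijections as diagram matchings.} It then remains to identify this minimum with $\dlp(X,Y)$. Using the conventions $\norm{x-\Deltapt}_2 = \norm{x-\pi_{\Delta}(x)}_2$ and $\norm{\Deltapt - \Deltapt}_2 = 0$, I would read any bijection $\ph:X^\bullet \r Y^\bullet$ as a matching of $X$ with $Y$: off-diagonal points of $X$ sent to off-diagonal points of $Y$ stay matched; an off-diagonal point of $X$ sent to a virtual $\Deltapt$ is matched to its diagonal projection (and symmetrically a virtual point of $X^\bullet$ sent to an off-diagonal $y \in Y_>$ records $y$ being matched to the diagonal); virtual-to-virtual matches cost zero and correspond to diagonal-to-diagonal matches. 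The point of taking $n_Y$ virtual copies in $X^\bullet$ and $n_X$ in $Y^\bullet$ is that a bijection restricting to $k$ off-diagonal-to-off-diagonal matches leaves exactly $n_X-k$ and $n_Y-k$ points to be sent to the diagonal, for which $n_X$ and $n_Y$ virtual slots are always sufficient, with $k$ zero-cost virtual-to-virtual matches left over. Hence this correspondence is cost-preserving and, up to zero-cost rearrangements among diagonal points, exhausts the finitely-supported bijections $X \r Y$ that realize the identity $\dlp(X,Y) = \inf\{C_p(\ph) : \ph:X\r Y \text{ a bijection}\}$ recorded above. Combining the three steps gives the proposition.

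\textbf{Main obstacle.} The passage from couplings to bijections is the standard Birkhoff--von Neumann argument and is routine for $p<\infty$, with the $p=\infty$ case needing only the separate support-contains-a-permutation observation. I expect the genuine care to lie in the bookkeeping of the last step: confirming that the prescribed virtual-point counts $n_X,n_Y$ make the correspondence an honest cost-preserving identification between bijections $X^\bullet \r Y^\bullet$ and the cost-relevant matchings of $X$ with $Y$, and that the infinitely many diagonal points of the true persistence diagrams contribute nothing beyond this finite virtual-point model.
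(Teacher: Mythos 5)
Your proposal is correct and follows essentially the same route as the paper's (much terser) proof, which simply invokes Birkhoff's theorem to reduce the infimum over couplings to an infimum over permutations and then identifies the resulting permutation with a bijection in the definition of $\dlp(X,Y)$. Your additional care with the $p=\infty$ case and with the virtual-point bookkeeping fills in details the paper leaves implicit, but introduces no new idea or different decomposition.
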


\begin{proof}
It is well-known as a consequence of Birkhoff's theorem (see \cite[\S0.1]{villani2003topics}) that the OT cost between measures on $n$-point spaces giving equal mass to all points is realized by a coupling that can be represented as an $n\times n$ permutation matrix. This permutation $\s$ provides the bijection in the definition of $\dlp(X,Y)$. 
\end{proof}

\begin{remark} The preceding OT formulation appears to work only in the case of diagrams with finitely many off-diagonal points. It would be interesting to clarify if a $\dlp$ distance between diagrams having countably many off-diagonal points can be formulated as an OT problem. The difficulty arises from ensuring that the optimal transportation plans correspond to permutation matrices, as required for the bijections in the definition of $\dlp$. 
\end{remark}

\section{Branching and deviant geodesics}
\label{sec:branch}

We now proceed to the proofs of Theorems \ref{thm:branch-dev-infty} and \ref{thm:branch-dev-one}.

\begin{figure}
\begin{minipage}{0.45\linewidth}
\centering
\begin{tikzpicture}
[node distance = 0pt, X/.style={circle,draw=purple,fill=purple,thick,inner sep = 0pt,minimum size = 6pt},
Y/.style={rectangle,draw=NavyBlue,fill=NavyBlue,thick,inner sep = 0pt,minimum size = 6pt}]
\draw[->,ultra thick] (0,0)--(5,0) ;
\draw[->,ultra thick] (0,0)--(0,5) ;
\draw[-,ultra thick] (0,0)--(4.5,4.5);
\node[X] (x) at (0,4.5)  {} ;
\node[X] (x') at (0,2.5)  {};
\node (mu) at (1,4)  {$\mu$};
\draw[-,thick, purple, dashed] (0,2.5)--(1.25,1.25);
\draw[-,thick, purple, dashed] (0,4.5)--(2.25,2.25);
\node [below left=of x] {$k$};
\node [below left=of x'] {$j$};
\end{tikzpicture}
\end{minipage}
\begin{minipage}{0.45\linewidth}
\centering
\begin{tikzpicture}
[node distance = 0pt, X/.style={circle,draw=purple,fill=purple,thick,inner sep = 0pt,minimum size = 6pt},
Y/.style={rectangle,draw=NavyBlue,fill=NavyBlue,thick,inner sep = 0pt,minimum size = 6pt}]
\draw[->,ultra thick] (0,0)--(5,0) ;
\draw[->,ultra thick] (0,0)--(0,5) ;
\draw[-,ultra thick] (0,0)--(4.5,4.5);
\node[Y] (y) at (0,4.5)  {};
\node[Y] (y') at (0,1)  {};
\node (nu) at (1,4)  {$\nu$};
\draw[-,thick, NavyBlue, dashed] (0,1)--(0.5,0.5);
\draw[-,thick, NavyBlue, dashed] (0,4.5)--(2.25,2.25);
\node [below left=of y] {$k$};
\node [below left=of y'] {$l$};
\end{tikzpicture}
\end{minipage}
\caption{Diagrams $Y$ (left) and $Z$ (right) as defined in the proof of Theorem \ref{thm:branch-dev-infty}. The separation between $j$ and $k$ is not to scale; in the proof we require $k > 3j$. }
\end{figure}
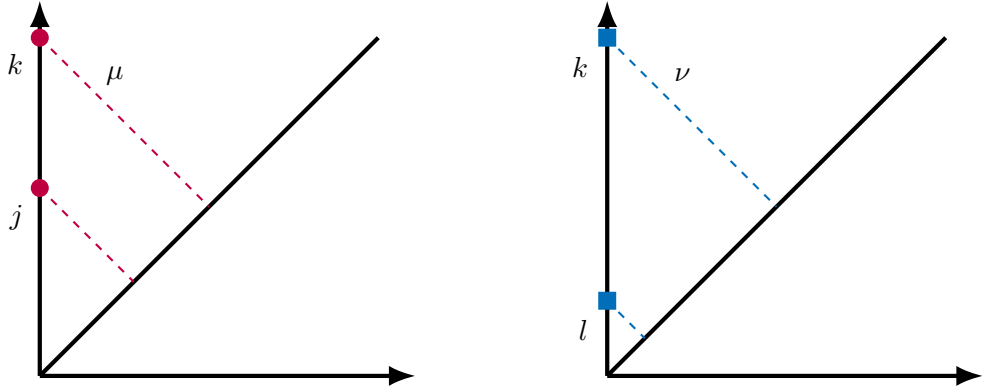

\begin{proof}[Proof of Theorem \ref{thm:branch-dev-infty}]

We begin with the proof of branching geodesics.
Let $X = \emptyset$, $Y = \{(0,k),(0,j)\}$, and $Z = \{(0,k),(0,l)\}$ for $0 < l  < j < 3j < k$. Now we define two curves $\mu, \nu: [0,1] \r \dgmlinf$ as follows:
\begin{align*}
\mu(t) &\defeq 
{
\begin{cases}
\set{\lp \frac{k}{2}t, \frac{k}{2}(2-t)\rp, \lp \frac{j}{2}(3t), \frac{j}{2}(2-3t) \rp } & 0 \leq t \leq \frac{1}{3}\\
\set{ \lp \frac{k}{2}t, \frac{k}{2}(2-t) \rp } & \frac{1}{3} \leq t \leq 1.
\end{cases} 
}\\
\nu(t) &\defeq 
{
\begin{cases}
\set{\lp \frac{k}{2}t, \frac{k}{2}(2-t)\rp, \lp \frac{l}{2}(3t), \frac{l}{2}(2-3t) \rp } & 0 \leq t \leq \frac{1}{3}\\
\set{ \lp \frac{k}{2}t, \frac{k}{2}(2-t) \rp } & \frac{1}{3} \leq t \leq 1.
\end{cases} 
}
\end{align*}

Thus $\mu, \nu$ are curves from $Y, Z$ to $X$. For convenience, define 
\[\mathbf{k}(t)\defeq \lp \tfrac{k}{2}t, \tfrac{k}{2}(2-t)\rp \text{ for }t \in [0,1], \qquad \mathbf{j}(t) \defeq \lp \tfrac{j}{2}(3t), \tfrac{j}{2}(2-3t) \rp \text{ for } t \in [0,\tfrac{1}{3}].\]

We check that $\mu,\nu$ are geodesics. It suffices to show this for $\mu$. First we see that $\dlinf(X,Y)$ is the perpendicular $q$-norm distance from $(0,k)$ to the diagonal; this is just $2^{(1/q) - 1}k$.

Let $s,t \in [\frac{1}{3},1]$. We observe that an optimal bijection matches $\mathbf{k}(s)$ and $\mathbf{k}(t)$; hence we have: 
\[\dlinf(\mu(s),\mu(t)) = \lnorm \lp \tfrac{k}{2}t, \tfrac{k}{2}(2-t)\rp - \lp \tfrac{k}{2}s, \tfrac{k}{2}(2-s)\rp \rnorm_q = 2^{(1/q)-1}k\lbar t - s \rbar = \lbar t- s \rbar \dlinf(X,Y).\] 

Let $s,t \in [0,\frac{1}{3}]$. First we claim that $\dlinf(\mu(s),\mu(t))$ is realized by the $q$-norm distance between $\mathbf{k}(s)$ and $\mathbf{k}(t)$. By the previous work, this is just $2^{(1/q)-1}k\lbar t - s \rbar$. We compare this to $\lnorm \mathbf{j}(s) - \mathbf{j}(t)\rnorm_q$:
\[\lnorm 
\lp \tfrac{j}{2}(3t), \tfrac{j}{2}(2-3t) \rp
-
\lp \tfrac{j}{2}(3s), \tfrac{j}{2}(2-3s) \rp
\rnorm_q 
= 
(3j)|t-s|2^{(1/q)-1}
< k|t-s|2^{(1/q)-1},
\]
where the last inequality holds because $3j < k$ by assumption. Thus $\dlinf(\mu(s),\mu(t)) = \lbar t- s \rbar \dlinf(X,Y).$ Notice that in this computation, it was implicit that an optimal matching would match $\mathbf{k}(s)$ to $\mathbf{k}(t)$ and $\mathbf{j}(s)$ to $\mathbf{j}(t)$; a cross-matching would not be optimal due to the greater distance that would need to be traversed. 

Finally let $s \in [0,\frac{1}{3}], t \in (\frac{1}{3},1]$. Again we claim that $\dlinf(\mu(s),\mu(t))$ is realized by $\lnorm \mathbf{k}(s) - \mathbf{k}(t) \rnorm_q$. The previous work shows that $\lnorm \mathbf{k}(s) - \mathbf{k}(t) \rnorm_q > \medlnorm \mathbf{j}(s) - (\tfrac{j}{2},\tfrac{j}{2}) \medrnorm_q$. It follows that $\dlinf(\mu(s),\mu(t)) = \lbar t- s \rbar \dlinf(X,Y).$

This shows that $\mu$ is a geodesic. The proof for $\nu$ is analogous. So $\mu, \nu$ are geodesics which are equal on $[\tfrac{1}{3},1]$, but clearly they branch at $t= \tfrac{1}{3}$ since $\dlinf(Y,Z) > 0$. Since $l < j < 3j < k$ were arbitrary, there are in fact infinitely many such branching geodesics. This concludes the first part of the proof.  \hfill$\blacksquare$

Notice that $\mu, \nu$ are not convex-combination geodesics; the points at $(0,j)$ and $(0,l)$ move too fast for the geodesics to be convex-combination, but slow enough that the geodesic property still holds. Even though these are deviant geodesics, there still seem to be bijections providing straight lines for the points to interpolate through. However, this need not be the case, and deviant geodesics may exist even when there is no supporting bijection. We see such a construction next.

Let $W = \{(0,k)\}$. Now we define a curve $\w:[0,1] \r \dgmlpq$ as follows:
\begin{align*}
\w(t) &\defeq 
{
\begin{cases}
\set{\lp \frac{k}{2}t, \frac{k}{2}(2-t)\rp, \lp j(\frac{1}{2} - t) , j(\frac{1}{2} + t) \rp } & 0 \leq t \leq \frac{1}{2}\\
\set{ \lp \frac{k}{2}t, \frac{k}{2}(2-t) \rp,  \lp j(t - \frac{1}{2}) , j(\frac{3}{2} - t) \rp } & \frac{1}{2} \leq t \leq 1.
\end{cases} 
}
\end{align*}
Then $\w$ is a curve from $W$ to $X$. Note that $\w(0)$ contains one off-diagonal point $(0,k)$, and this point linearly moves to the diagonal as $t \uparrow 1$. However, starting at $t = 0$, a point emerges from the diagonal at $(j/2,j/2)$ and moves linearly to $(0,j)$ as $t\uparrow 1/2$, which then returns to the diagonal as $t \uparrow 1$. Calculations such as the ones carried out above show that $\w$ is a geodesic; for the reader's convenience, we note that the point moving back and forth between $(0,j)$ and the diagonal has speed $j < k/3,$ so the $l^\infty$ matching only sees the $q$-norm distance between $\mathbf{k}(s)$ and $\mathbf{k}(t)$. This is the reason $\w$ is a geodesic. However, $\w$ is not a convex-combination geodesic from $W$ to $X$. Moreover, for different choices of $j$, we get infinitely many geodesics from $W$ to $X$, all of which are mutually distinct. This concludes the proof. \qedhere

\end{proof}

Next we proceed to the proof of Theorem \ref{thm:branch-dev-one}.

\begin{proof}[Proof of Theorem \ref{thm:branch-dev-one}]
Fix $k \gg 0$ so that we do not have to consider situations where points are matched to the diagonal. Let $X = \{(0,k), (1,k-1)\}$ and $Y = \{(1,k+1),(2,k)\}$. This configuration is illustrated in Figure \ref{fig:deviant11}. Define a curve $\mu:[0,1] \r \dgmlpq$ as follows:

\begin{align*}
\mu(t) &\defeq 
{
\begin{cases}
\set{\lp 2t,k  \rp, \lp 1, k-1+2t \rp } & 0 \leq t \leq \frac{1}{2}\\
\set{ \lp 1,k+2(t- \frac{1}{2}) \rp,  \lp 1 + 2(t-\frac{1}{2}), k \rp } & \frac{1}{2} \leq t \leq 1.
\end{cases} 
}
\end{align*}
This curve corresponds to the lefmost configuration in Figure \ref{fig:deviant11}. The points $x,x'$ come together at the center, then bend and travel to $y,y'$, respectively. Next we verify that $\mu$ is a geodesic. First note that $d_1[l^1](X,Y) = 4$. Next let $s\leq t \in [0,\frac{1}{2}].$ The optimal matching between $\mu(s)$ and $\mu(t)$ happens in the simple way: points along the dashed line get matched, and points along the solid line get matched (here we are referring to Figure \ref{fig:deviant11}). The cost of this matching is as follows:
\begin{align*}
d_1[l^1](\mu(s),\mu(t)) &= \lnorm (2s,k) - (2t,k) \rnorm_1 + \lnorm (1,k-1+2s) - (1,k-1+2t) \rnorm_1\\
&= 2(t-s) + 2(t-s) = |t-s| \, d_1[l^1](X,Y).
\end{align*}

The verification for $s,t \in [\frac{1}{2},1]$ is analogous. An interesting case is $s \in [1,\frac{1}{2}),\, t \in [\frac{1}{2},1]$. By virtue of using the $l^1$ ground metric, there are two optimal bijections: matching the points according to the dashed/solid lines, and cross-matching points on the dashed and solid lines. Using the first of these bijections, we calculate:
\begin{align*}
d_1[l^1](\mu(s),\mu(t)) &= \medlnorm (2s,k) - (1,k+2(t-\tfrac{1}{2})) \medrnorm_1 + \medlnorm (1,k-1+2s) - (1+2(t-\tfrac{1}{2}),k) \medrnorm_1\\
&= 2(t-s) + 2(t-s) = |t-s| \, d_1[l^1](X,Y).
\end{align*}
Thus $\mu$ is a geodesic. Note that it is different from the convex-combination geodesic illustrated at the right of Figure \ref{fig:deviant11}. 

Moreover, note that curves with corners, as illustrated in the middle of Figure \ref{fig:deviant11}, would also be geodesics by virtue of the ground metric being $l^1$. There is an infinite choice of positions for these corners, and so we get an infinite family of deviant geodesics which are all distinct from each other. \hfill $\blacksquare$

Now we proceed to the proof of branching geodesics. We refer the reader to Figure \ref{fig:branch11}. Starting with $X = \{(0,k), (1,k-1)\}$ as before and a fixed $r \in [0,1]$, consider the curve $\nu_r$ which: (1) transports the points $x = (0,k)$ and $x' = (1,k-1)$ to $(1,k)$ at constant speed over the interval $t \in [0,\frac{1}{2}]$, and (2) moves $x,x'$ jointly to $(1,k+r)$ and then to $(1 + (1-r), k+r)$, all at constant speed over the interval $t \in [\frac{1}{2},1]$. The cases $r = 1, 0, 0.5$ are illustrated from left to right, respectively, in Figure \ref{fig:branch11}. Calculations analogous to the ones carried out above show that these curves are all geodesics, and by construction, they branch at $t = \frac{1}{2}$. Thus $\{\nu_r : r \in [0,1]$ is an infinite family of branching geodesics in $\dgmlpq$.
\end{proof}

\begin{figure}
\begin{minipage}{0.32\linewidth}
\centering
\begin{tikzpicture}
[node distance = 0pt, X/.style={circle,draw=purple,fill=purple,thick,inner sep = 0pt,minimum size = 6pt},
Y/.style={rectangle,draw=NavyBlue,fill=NavyBlue,thick,inner sep = 0pt,minimum size = 6pt}]
\node[X] (x) at (-1,0)  {} ;
\node[X] (x') at (0,-1)  {};
\node[Y] (y) at (0,1)  {};
\node[Y] (y') at (1,0)  {};
\draw[-,thick, dashed] (-1,0)--(0,0)--(0,1);
\draw[-,thick] (0,-1)--(0,0)--(1,0);
\node [below left=of x] {$x$};
\node [below left=of x'] {$x'$};
\node [above right=of y] {$y$};
\node [above right=of y'] {$y'$};
\end{tikzpicture}
\end{minipage}
\begin{minipage}{0.32\linewidth}
\centering
\begin{tikzpicture}
[node distance = 0pt, X/.style={circle,draw=purple,fill=purple,thick,inner sep = 0pt,minimum size = 6pt},
Y/.style={rectangle,draw=NavyBlue,fill=NavyBlue,thick,inner sep = 0pt,minimum size = 6pt}]
\node[X] (x) at (-1,0)  {} ;
\node[X] (x') at (0,-1)  {};
\node[Y] (y) at (0,1)  {};
\node[Y] (y') at (1,0)  {};
\draw[-,thick, dashed] (-1,0)--(-0.5,0)--(0,0.5)--(0,1);
\draw[-,thick] (0,-1)--(0,-0.5)--(0.5,0)--(1,0);
\node [below left=of x] {$x$};
\node [below left=of x'] {$x'$};
\node [above right=of y] {$y$};
\node [above right=of y'] {$y'$};
\end{tikzpicture}
\end{minipage}
\begin{minipage}{0.32\linewidth}
\centering
\begin{tikzpicture}
[node distance = 0pt, X/.style={circle,draw=purple,fill=purple,thick,inner sep = 0pt,minimum size = 6pt},
Y/.style={rectangle,draw=NavyBlue,fill=NavyBlue,thick,inner sep = 0pt,minimum size = 6pt}]
\node[X] (x) at (-1,0)  {} ;
\node[X] (x') at (0,-1)  {};
\node[Y] (y) at (0,1)  {};
\node[Y] (y') at (1,0)  {};
\draw[-,thick, dashed] (-1,0)--(0,1);
\draw[-,thick] (0,-1)--(1,0);
\node [below left=of x] {$x$};
\node [below left=of x'] {$x'$};
\node [above right=of y] {$y$};
\node [above right=of y'] {$y'$};
\end{tikzpicture}
\end{minipage}
\caption{Deviant geodesics in $\dgmlpq$ for $p=q=1$.}
\label{fig:deviant11}
\end{figure}

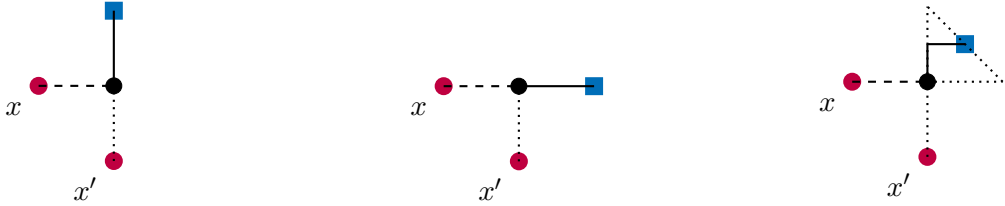
\begin{figure}
\begin{minipage}{0.32\linewidth}
\centering
\begin{tikzpicture}
[node distance = 0pt, X/.style={circle,draw=purple,fill=purple,thick,inner sep = 0pt,minimum size = 6pt},
Y/.style={rectangle,draw=NavyBlue,fill=NavyBlue,thick,inner sep = 0pt,minimum size = 6pt}]
\node[X] (x) at (-1,0)  {} ;
\node[X] (x') at (0,-1)  {};
\node[Y] (y) at (0,1)  {};
\node[] (y') at (1,0)  {};
\node[circle,draw,fill,inner sep = 0pt,minimum size = 6pt] (y') at (0,0)  {};
\draw[-,thick, dashed] (-1,0)--(0,0);
\draw[-,thick,dotted] (0,-1)--(0,0);
\draw[-,thick] (0,0)--(0,1);
\node [below left=of x] {$x$};
\node [below left=of x'] {$x'$};
\end{tikzpicture}
\end{minipage}
\begin{minipage}{0.32\linewidth}
\centering
\begin{tikzpicture}
[node distance = 0pt, X/.style={circle,draw=purple,fill=purple,thick,inner sep = 0pt,minimum size = 6pt},
Y/.style={rectangle,draw=NavyBlue,fill=NavyBlue,thick,inner sep = 0pt,minimum size = 6pt}]
\node[X] (x) at (-1,0)  {} ;
\node[X] (x') at (0,-1)  {};
\node[] (y) at (0,1)  {};
\node[Y] (y') at (1,0)  {};
\node[circle,draw,fill,inner sep = 0pt,minimum size = 6pt] (y') at (0,0)  {};
\draw[-,thick, dashed] (-1,0)--(0,0);
\draw[-,thick,dotted] (0,-1)--(0,0);
\draw[-,thick] (0,0)--(1,0);
\node [below left=of x] {$x$};
\node [below left=of x'] {$x'$};
\end{tikzpicture}
\end{minipage}
\begin{minipage}{0.32\linewidth}
\centering
\begin{tikzpicture}
[node distance = 0pt, X/.style={circle,draw=purple,fill=purple,thick,inner sep = 0pt,minimum size = 6pt},
Y/.style={rectangle,draw=NavyBlue,fill=NavyBlue,thick,inner sep = 0pt,minimum size = 6pt}]
\node[X] (x) at (-1,0)  {} ;
\node[X] (x') at (0,-1)  {};
\node[Y] (y) at (0.5,0.5)  {};
\node[circle,draw,fill,inner sep = 0pt,minimum size = 6pt] (y') at (0,0)  {};
\draw[-,thick, dashed] (-1,0)--(0,0);
\draw[-,thick,dotted] (0,-1)--(0,0);
\draw[-,thick,dotted] (0,0)--(1,0);
\draw[-,thick,dotted] (0,0)--(0,1);
\draw[-,thick,dotted] (1,0)--(0,1);
\draw[-,thick] (0,0.5)--(0.5,0.5);
\draw[-,thick] (0,0)--(0,0.5);
\node [below left=of x] {$x$};
\node [below left=of x'] {$x'$};
\end{tikzpicture}
\end{minipage}\caption{Branching geodesics in $\dgmlpq$ for $p=q=1$. The rightmost figure depicts points $x$ and $x'$ which move at the same speed to the center, merge, and travel up and to the right along the solid line. }
\label{fig:branch11}
\end{figure}

\section{Characterization of geodesics}
\label{sec:geod-char}

\subsection{A preliminary result about limiting bijections}

We now collect a lemma (Lemma \ref{lem:limiting-bijection}) showing how, given a sequence of bijections between persistence diagrams, we can pick out a subsequence of bijections that converges pointwise to a limiting bijection. This lemma is used directly in proving Theorem \ref{thm:char-II} from Theorem \ref{thm:char-I}, and as a corollary we also obtain the existence of optimal bijections between diagrams, which is used throughout the proof of Theorem \ref{thm:char-I}. The main proof technique is a standard diagonal argument with some additional consideration for the multiset nature of persistence diagrams, and a similar proof appeared in \cite[Proposition 1]{turner2013means}. 

Our reason for viewing persistence diagrams in $\R^2 \times \N$ becomes apparent in this section. We view $\R^2 \times \N$ as a subset of $\R^3$ endowed with the subspace topology. This allows us to invoke the Bolzano-Weierstrass theorem to obtain convergent sequences. 


\newcommand{\yy}{\mathbf{y'}}

\newcommand{\xx}{\mathbf{x'}}
\newcommand{\nn}{^{(n)}}
\newcommand{\ii}{^{(i)}}
\newcommand{\jj}{^{(j)}}
\newcommand{\Psixy}{\Psi^{XY}}
\newcommand{\Psiyx}{\Psi^{YX}}

\medskip

\paragraph{Notation} Below we will write $\lim$ to denote limits with respect to the usual topology in $\R^2$ or $\R^3$. This is different from a $\dlpq$ limit, which uses only the geometric component of a point in a persistence diagram and ignores the indexing coordinate. Also, we interchangeably write $X_>$ or $X\setminus \Delta$ to denote the off-diagonal points of a persistence diagram $X$, depending on which notation better preserves typography. To emphasize that each point in a persistence diagram is a vector, we use boldface notation, e.g. $\x$ or $\y$.

For a point $a$ in a persistence diagram, we write $\pi_{\Delta}(a) \in \R^2$ to denote its projection onto the diagonal, ignoring the indexing coordinate $\N$. In other words, it is the shorthand notation for projecting a point to its geometric component in $\R^2$, and then further projecting the resulting point to the diagonal.

\newcounter{saveenum}
\begin{lemma} 
\label{lem:bw-r3}
Fix $p,q \in [1,\infty]$. Let $X, Y \in \dgmlpq$, and let $\y \in Y\setminus \Delta$. Then, 
\begin{enumerate}
\item There exists $\e >0 $ such that $B_{\R^3}(\y,\e) \cap Y = \{\y\}$.
\setcounter{saveenum}{\value{enumi}}
\end{enumerate}
Suppose also that $\Phi_k:X \r Y$ is a sequence of bijections and $\x \in X$ is such that $\lim_k \Phi_k(\x) = \y$.
\begin{enumerate}
\setcounter{enumi}{\value{saveenum}}
\item Then there exists $k_0 \in \N$ such that for all $k \geq k_0$, we have $\Phi_k(\x) = \y$.
\setcounter{enumi}{\value{saveenum}} 
\end{enumerate}
Finally, suppose $\yy \in \Delta$ and $\xx \in X\setminus \Delta$ is such that $\lim_k \Phi_k(\xx) = \yy$. 
Suppose also that $C_p[l^q](\Phi_k) \r \dlpq(X,Y)$ as $k \r \infty$. 
Then,
\begin{enumerate}
\setcounter{enumi}{\value{saveenum}+1} 
\item $\pi_{\R^2}(\yy) = \pi_{\Delta}(\xx)$ (i.e. optimal bijections map $\xx$ to the diagonal via orthogonal projection).
\end{enumerate}

\end{lemma}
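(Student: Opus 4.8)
The plan is to argue by contradiction using the asymptotic-optimality hypothesis $C_p[l^q](\Phi_k) \to \dlpq(X,Y)$. Write $\mathbf{m} := \pi_\Delta(\xx)$ for the orthogonal projection of $\xx$ onto the diagonal and $\mathbf{n} := \pir(\yy)$ for the geometric location of the limiting diagonal point. Since $\yy \in \Delta$ while $\mathbf{m}$ realizes the $q$-norm distance from $\xx$ to the diagonal, one always has $\norm{\xx - \yy}_q \geq \norm{\xx - \mathbf{m}}_q$; moreover, for $q \in (1,\infty]$ the midpoint $\mathbf{m}$ is the \emph{unique} $q$-norm nearest point of $\xx$ on the diagonal. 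Hence it suffices to prove the equality $\norm{\xx - \yy}_q = \norm{\xx - \mathbf{m}}_q$, from which uniqueness yields $\pir(\yy) = \mathbf{m}$. Accordingly I would assume for contradiction that $\norm{\xx - \yy}_q =: D' > D := \norm{\xx - \mathbf{m}}_q$ and construct, for all large $k$, a competitor bijection whose cost undercuts that of $\Phi_k$ by a fixed positive amount.

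The competitor $\Phi_k'$ is obtained from $\Phi_k$ by rerouting $\xx$ to the diagonal at $\mathbf{m}$ via two ``Hilbert-hotel'' shifts exploiting the infinite multiplicity of the diagonal. Since $Y$ contains infinitely many diagonal copies $\mathbf{p}_1, \mathbf{p}_2, \ldots$ at the single location $\mathbf{m}$, the shift $\Phi_k^{-1}(\mathbf{p}_i) \mapsto \mathbf{p}_{i+1}$ reassigns all these preimages among slots all sitting at $\mathbf{m}$; because $\norm{\Phi_k^{-1}(\mathbf{p}_i) - \mathbf{p}_{i+1}}_q = \norm{\Phi_k^{-1}(\mathbf{p}_i) - \mathbf{p}_i}_q$, the shift is exactly cost-neutral and frees the slot $\mathbf{p}_1$, which I assign to $\xx$ at cost $\norm{\xx - \mathbf{p}_1}_q^p = D^p$ (deleting the old term $\norm{\xx - \Phi_k(\xx)}_q^p$ and orphaning the image $\Phi_k(\xx)$). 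Symmetrically, $X$ contains infinitely many diagonal copies $\mathbf{q}_1, \mathbf{q}_2, \ldots$ at $\mathbf{n}$, and the cost-neutral shift $\mathbf{q}_{i+1} \mapsto \Phi_k(\mathbf{q}_i)$ frees the preimage $\mathbf{q}_1$, which I match to the orphaned $\Phi_k(\xx)$ at cost $\norm{\mathbf{q}_1 - \Phi_k(\xx)}_q^p = \norm{\mathbf{n} - \Phi_k(\xx)}_q^p$. The two shift-families and the reassigned slots can be chosen mutually disjoint (the infinite supply lets me avoid the finitely many coincidences with $\xx$, $\Phi_k(\xx)$, and each other; indeed $\xx \notin \{\Phi_k^{-1}(\mathbf{p}_i)\}$ automatically for large $k$ since $\Phi_k(\xx)$ sits near $\mathbf{n} \neq \mathbf{m}$), so $\Phi_k'$ is a genuine bijection agreeing with $\Phi_k$ off the reassigned slots.

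Because both shifts are cost-neutral and $C_p[l^q](\Phi_k)^p < \infty$ (finite since $\dlpq(X,Y) < \infty$ by the triangle inequality), the infinite rearrangements are valid and the only net change is $C_p[l^q](\Phi_k')^p - C_p[l^q](\Phi_k)^p = D^p - \norm{\xx - \Phi_k(\xx)}_q^p + \norm{\mathbf{n} - \Phi_k(\xx)}_q^p$. Using $\Phi_k(\xx) \to \yy$, so that $\norm{\xx - \Phi_k(\xx)}_q \to D'$ and $\norm{\mathbf{n} - \Phi_k(\xx)}_q \to \norm{\mathbf{n} - \yy}_q = 0$, this tends to $D^p - (D')^p < 0$. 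Hence for large $k$ we get $C_p[l^q](\Phi_k')^p \leq C_p[l^q](\Phi_k)^p - \tfrac12((D')^p - D^p)$; letting $k \to \infty$ and using $C_p[l^q](\Phi_k) \to \dlpq(X,Y)$ yields $\dlpq(X,Y)^p \leq \dlpq(X,Y)^p - \tfrac12((D')^p - D^p)$, a contradiction. Therefore $D' = D$, and for $q \in (1,\infty]$ uniqueness of the nearest diagonal point forces $\pir(\yy) = \pi_\Delta(\xx)$.

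The main obstacle is precisely the competitor construction. A naive two-point swap fails: rerouting $\xx$ to $\mathbf{m}$ and handing its old partner $\Phi_k(\xx)$ to the displaced diagonal point creates a long edge of length $\approx \norm{\mathbf{m} - \mathbf{n}}_q$, whose cost exactly cancels the saving (for $p = q = 2$ this is visible as Pythagoras, $D^2 + \norm{\mathbf{m}-\mathbf{n}}_2^2 = (D')^2$). The real content is that the infinite multiplicity of the diagonal permits rerouting through same-location slots at zero marginal cost, so the \emph{only} edges whose cost actually changes are the single edge at $\xx$ and one vanishing edge at $\mathbf{n}$. I would also flag two caveats consistent with the scope of the theorems invoking this lemma: the sup-structure of $C_\infty[l^q]$ makes the cost-reduction step genuinely fail when $p = \infty$ (matching the existence of deviant geodesics there), so the argument is written for $p < \infty$; and the final identification of $\yy$ with the orthogonal projection uses uniqueness of the $q$-norm nearest point, which holds for $q \in (1,\infty]$ but not for $q = 1$.
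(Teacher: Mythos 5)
Your proposal proves only assertion (3) of the lemma; assertions (1) and (2) are nowhere addressed, and those are precisely the parts that Lemma \ref{lem:limiting-bijection} leans on later (the $\R^3$-ball isolation of an off-diagonal point is what drives the injectivity and surjectivity claims there). To close that gap you need the observation recorded after the definition of $\dgmlpq$: for $p<\infty$ a region of $\R^2_>$ bounded away from the diagonal contains only finitely many points of $\pir(Y)$, and distinct copies of the same geometric point differ by at least $1$ in the $\N$ coordinate, so a sufficiently small $\R^3$-ball around $\y$ meets $Y$ only in $\y$; assertion (2) is then immediate. (For $p=\infty$ assertion (1) can in fact fail, since off-diagonal points may accumulate while $\dlinf(Y,\emptyset)$ stays finite; that is an issue with the stated range $p\in[1,\infty]$, not with your argument, but it should at least be flagged.)

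Your treatment of assertion (3) is correct for $p<\infty$, $q>1$, and is, if anything, more complete than the paper's. The paper also argues by contradiction, but it passes directly from ``the single edge at $\xx$ is longer than $\norm{\xx-\pi_\Delta(\xx)}_q+\eta_\e$'' to ``$C_p[l^q](\Phi_k)\geq \dlpq(X,Y)+\eta_\e$''; that inference is exactly what requires exhibiting a cheaper competitor bijection, and your cost-neutral Hilbert-hotel shifts through same-location diagonal slots supply one, for the right reason (as you note, a naive finite swap reintroduces an edge of length roughly $\norm{\mathbf{m}-\mathbf{n}}_q$ that cancels the saving). One bookkeeping point: the two shift families can genuinely collide when $\Phi_k$ sends infinitely many copies of $\mathbf{n}$ in $X$ to copies of $\mathbf{m}$ in $Y$ --- not just finitely many coincidences, as you suggest. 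But in that case a single transposition swapping the images of $\xx$ and one such copy already decreases the $p$-th power of the cost by at least $(D')^p-D^p-o(1)$, and otherwise cofinitely many copies of $\mathbf{n}$ avoid the first chain, so the construction survives with one extra sentence. Your caveats for $p=\infty$ (the sup-cost need not register the rerouted edge) and $q=1$ (non-uniqueness of the nearest diagonal point) are accurate; the paper's own proof of assertion (3) silently assumes both restrictions, and the downstream uses in Theorems \ref{thm:char-I} and \ref{thm:char-II} stay within them.
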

\begin{proof}[Proof of Lemma \ref{lem:bw-r3}]
Let $\e>0$ be small enough so that $B_{\R^3}(\y,2\e) \cap \Delta = \emptyset$, and define  $U:=B_{\R^3}(\y,\e) \cap Y \cap \Delta = \emptyset$. Then $U$ has a strictly positive distance to the diagonal. Since $Y \in \dgmlpq$, there can only be finitely many points, including multiplicity, in $U$. Different copies of $\y$ in $U$ have the same $\R^2$ coordinates, but differ on the $\N$ coordinate by at least 1. Thus $\e$ can be made sufficiently small so that $B_{\R^3}(\y,\e) \cap Y = \{\y\}$. This proves the first assertion. The second assertion follows immediately. 

The third assertion is also easy to see, and we provide a few lines of proof.
 Suppose toward a contradiction that $\yy \in \Delta$ and $\lim_k \Phi_k(\xx) = \yy$, but $\pi_{\R^2}(\yy) \neq \pi_{\Delta}(\xx)$, i.e. $\yy$ is not the diagonal projection of $\xx$. 
 Then there exists $\e > 0$ and $\eta_\e >0$ such that the distance from $\pi_{\R^2}(\xx)$ to $B_{\R^2}(\pi_{\R^2}(\yy),\e)$ is at least 
$\norm{\pi_{\R^2}(\xx) - \pi_{\Delta}(\xx)}_{q} + \eta_\e$. Thus there exists $k_0 \in \N$ such that for all $k \geq k_0$, $\norm{\pi_{\R^2}(\xx) - \pi_{\R^2}(\Phi_k(\xx))}_{q} > \norm{\pi_{\R^2}(\xx) - \pi_{\Delta}(\xx)}_{q} + \eta_\e$. But then $C_p[l^q](\Phi_k) \geq \dl(X,Y) + \eta_\e$ for all $k \geq k_0$. This is a contradiction. \end{proof}

Here is the main result of this section. 

\begin{lemma}[Limiting bijections]
\label{lem:limiting-bijection}
Let $p,q \in [1,\infty]$.
Let $X,Y \in \dgmlpq$, and let $\Phi_k:X \r Y$ be a sequence of bijections such that $C_p[l^q](\Phi_k) \r \dlpq(X,Y)$. Then there exists a subsequence indexed by $L \subseteq \N$ and a limiting bijection $\Phi_*$ such that $\Phi_k \xrightarrow{k \in L,\, k \r \infty} \Phi_*$ pointwise and $C_p[l^q](\Phi_*) = \dlpq(X,Y)$. 
\end{lemma}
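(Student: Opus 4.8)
The plan is to extract the limiting bijection by a diagonal argument, building $\Phi_*$ as a pointwise limit along a nested sequence of subsequences, and then verifying both that $\Phi_*$ is a genuine bijection and that it is cost-optimal. First I would fix an enumeration $X = \{\x^1, \x^2, \ldots\}$ of the (countably many) points of $X$, including diagonal points. For the first point $\x^1$, I consider the sequence $\Phi_k(\x^1) \in Y \subseteq \R^3$. Here the reason for working in $\R^2 \times \N \subseteq \R^3$ pays off: since $C_p[l^q](\Phi_k) \r \dlpq(X,Y) < \infty$, the displacements $\norm{\x^1 - \Phi_k(\x^1)}_q$ stay bounded, so the points $\Phi_k(\x^1)$ lie in a bounded subset of $\R^3$ (boundedness in the $\N$-coordinate requires a small argument, handled via part (1) of Lemma \ref{lem:bw-r3}, since accumulation can only happen at finitely many off-diagonal points or along the diagonal). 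By Bolzano--Weierstrass I pass to a subsequence along which $\Phi_k(\x^1)$ converges to some limit in $\overline{Y} \subseteq \R^3$; iterating over $\x^2, \x^3, \ldots$ and diagonalizing produces a single subsequence $L \subseteq \N$ along which $\Phi_k(\x^i)$ converges for every $i$. I then define $\Phi_*(\x^i)$ to be this limit when the limit lies in $Y\setminus \Delta$, and to be the orthogonal diagonal projection $\pi_\Delta(\x^i)$ (with an appropriately chosen $\N$-slot) when the limit lies on the diagonal.

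Next I would verify that $\Phi_*$ is well-defined and a bijection $X \r Y$. The key tool here is parts (2) and (3) of Lemma \ref{lem:bw-r3}. For off-diagonal limits, part (2) gives that $\Phi_k(\x^i)$ is \emph{eventually constant} (equal to the limit point $\y \in Y\setminus\Delta$) along $L$, which immediately forces injectivity on off-diagonal targets: two distinct points $\x^i, \x^j$ cannot both converge to the same off-diagonal $\y$, since for large $k \in L$ both would equal $\y$, contradicting that $\Phi_k$ is a bijection. For diagonal limits, part (3) guarantees the limit is exactly the orthogonal projection $\pi_\Delta(\x^i)$, so I can assign $\N$-slots to keep these images distinct on the diagonal (using the infinite multiplicity of $\Delta$). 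Surjectivity onto the off-diagonal points of $Y$ is the reverse direction: for each $\y \in Y\setminus\Delta$ I apply the same diagonal/Bolzano--Weierstrass argument to the inverse bijections $\Phi_k^{-1}: Y \r X$ (whose costs equal those of $\Phi_k$), obtaining that some $\x \in X$ maps to $\y$ eventually; combined with the eventual-constancy from part (2), this matches each off-diagonal $\y$ to a unique preimage. The diagonal points of $Y$ are then covered by extending $\Phi_*$ arbitrarily but bijectively on the remaining (diagonal) points, which costs nothing.

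Finally I would show $C_p[l^q](\Phi_*) = \dlpq(X,Y)$. The inequality $C_p[l^q](\Phi_*) \geq \dlpq(X,Y)$ is automatic since $\dlpq$ is an infimum over all bijections. For the reverse, I use lower semicontinuity of the cost under pointwise convergence: each summand $\norm{\x^i - \Phi_k(\x^i)}_q^p$ converges to $\norm{\x^i - \Phi_*(\x^i)}_q^p$ (for off-diagonal limits by eventual constancy, for diagonal limits because $\Phi_*(\x^i) = \pi_\Delta(\x^i)$ is exactly the limiting displacement forced by part (3)), so by Fatou's lemma applied to the counting measure on $\N$,
\[
C_p[l^q](\Phi_*)^p = \sum_i \norm{\x^i - \Phi_*(\x^i)}_q^p \leq \liminf_{k \in L} \sum_i \norm{\x^i - \Phi_k(\x^i)}_q^p = \liminf_{k \in L} C_p[l^q](\Phi_k)^p = \dlpq(X,Y)^p,
\]
with the obvious modification (taking suprema and using a pointwise bound) in the case $p = \infty$. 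I expect the main obstacle to be the bookkeeping for surjectivity and the consistent assignment of the $\N$-coordinates for points sent to the diagonal: one must argue carefully, via the inverse bijections, that no off-diagonal point of $Y$ is left unmatched in the limit, and that the diagonal-projected images can always be placed in distinct $\N$-slots without incurring cost. The analytic core (Fatou plus parts (2)--(3) of Lemma \ref{lem:bw-r3}) is comparatively routine once the combinatorial structure of the limit is pinned down.
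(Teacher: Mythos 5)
Your overall strategy---a diagonal/Bolzano--Weierstrass extraction in $\R^2\times\N\subseteq\R^3$, eventual constancy at off-diagonal limits, the inverse bijections for surjectivity, and a Fatou-type lower semicontinuity argument for the cost---is the same as the paper's, and the Fatou finish is in fact a more explicit way to obtain $C_p[l^q](\Phi_*)=\dlpq(X,Y)$ than the paper's terse ``follows from the construction.'' But there are two genuine gaps. First, $X$ is not countable: by Definition \ref{def:pdgm} it contains countably infinitely many copies of the entire diagonal $\Delta=\{(x,x):x\in\R\}$, so you cannot ``fix an enumeration of the (countably many) points of $X$, including diagonal points'' and diagonalize over all of $X$. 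The extraction must be restricted to a countable set of relevant points, and the difficulty is that you do not know in advance which diagonal points of $X$ are relevant. The paper resolves this by processing the \emph{inverse} maps first: it diagonalizes the geometric components of $\Phi_k^{-1}$ over the countable set $Y_0=(Y\setminus\Delta)\cup\{\text{diagonal projections of points of } X\setminus\Delta\}$, uses the resulting limits to identify the set $Q$ of diagonal points of $X$ that must be matched to off-diagonal points of $Y$, and only then diagonalizes the forward maps over $X_1=(X\setminus\Delta)\sqcup Q$. (A related slip: $\Phi_k(\x^1)$ need not lie in a bounded subset of $\R^3$, since the $\N$-coordinate can blow up as images drift toward the diagonal; the paper first extracts limits of the $\R^2$-components only, and tracks the $\N$-coordinate only on the set $A$ where limits are off-diagonal.)

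Second, and relatedly, your surjectivity argument covers only half the cases. For $\y\in Y\setminus\Delta$ whose preimages $\Phi_k^{-1}(\y)$ converge to an off-diagonal point of $X$, eventual constancy does produce a unique preimage, as you say. But if $\Phi_k^{-1}(\y)$ converges to the diagonal (necessarily to $\pi_\Delta(\y)$, by the analogue of Lemma \ref{lem:bw-r3}(3)), there is no eventual constancy and your construction assigns $\y$ no preimage at all: no off-diagonal $\x$ satisfies $\Phi_k(\x)\to\y$, and your $\Phi_*$ is defined only on the points you enumerated. One must adjoin the point $(\pi_\Delta(\y),i)$ to the domain and declare that $\Phi_*$ sends it to $\y$---this is exactly the paper's set $Q$---and these points carry strictly positive cost $\lnorm \pi_\Delta(\y)-\y\rnorm_q^p$, which must also be accounted for in the Fatou sum (it arises as the limit of $\lnorm \Phi_k^{-1}(\y)-\y\rnorm_q^p$, a term of $C_p[l^q](\Phi_k)^p$ whose index in $X$ varies with $k$, so the index set of your sum cannot simply be a fixed enumeration of $X$). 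You flag this as ``bookkeeping,'' but it is the substantive content of the paper's proof rather than a routine afterthought.
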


\begin{proof}[Proof of Lemma \ref{lem:limiting-bijection}]
For each $\Phi_k$, we let $\Psixy_k:X \r Y|_{\R^2}$ denote the geometric part (i.e. the $\R^2$ component) of $\Phi_k$. 
We also write $\Psiyx_k: Y \r X|_{\R^2}$ to denote the geometric component of the inverse map $\Phi_k\inv$. 
Recall that only the geometric component is involved in $\dlpq$ computations (cf. Definition \ref{def:pdgm}).

Define $Y_0:= (Y\setminus \Delta) \cup 
\{\y \in \Delta : \pi_{\R^2}(\y) = \pi_{\Delta}(\x),\; \x \in X\setminus \Delta\}$. Then $Y_0$ denotes the union of the countably many off-diagonal points of $Y$ with the countably many copies of diagonal points that are projections of off-diagonal points in $X$. 
This is a countable set. Fix an enumeration $Y_0 = \{\y^{(n)}\}_{n \in \N}$. 

Since $X,Y \in \dgml$, $\dl(X,Y) < \infty$, and $C_p[l^q](\Phi_k) \r \dl(X,Y)$, we know $\lp \Psiyx_k(\y\ii)\rp_k$ is a bounded sequence in $\R^2$ for each $i \in \N$. By a diagonal argument and the Bolzano-Weierstrass theorem, we obtain a diagonal subsequence indexed by $J \subseteq \N$ such that $(\Psiyx_k)_{k \in J}$ converges pointwise on $Y_0$. Define $\Psiyx_*$ on $Y_0$ by setting $\Psiyx_*(\y) := \lim_{k \r \infty, \, k\in J}\Psiyx_k(\y)$ for each $\y \in Y_0$.
Note that if $\Psiyx_*(\y\ii) \in \Delta$ for some $\y\ii \in Y_0 \setminus \Delta$, then by an argument analogous to that of Lemma \ref{lem:bw-r3}, we have $\Psiyx_*(\y\ii) = \pi_{\Delta}(\y\ii) \in X$.

Next define:
\begin{align*}
Q:= \{( \pi_{\Delta}(\y\ii) , i ) : \y\ii \in Y_0 \setminus \Delta, \; \Psiyx_*(\y\ii) \in \Delta \},  && X_1:= (X\setminus \Delta) \sqcup 
Q.
\end{align*}

$Q$ contains all the diagonals of $X$ matched to off-diagonals in $Y$. $X_1$ is countable, so another application of a diagonal argument and the Bolzano-Weierstrass theorem gives a subsequence indexed by $K\subseteq J$ such that $(\Psixy_k)_{k \in K}$ converges pointwise on $X_1$. Define $\Psixy_*(\x):= \lim_{k \r \infty, \, k \in K}(\Psixy_k(\x))$ for each $\x \in X_1$.

Next write $Y_0^{\offd}$ and $Y_0^{\ond}$ to denote the off-diagonal and on-diagonal points of $Y_0$, respectively.
Define $A:= \{\x \in (X\setminus \Delta) : \Psixy_*(\x) \in Y_0^{\offd}|_{\R^2}\}$ and $B:= (X\setminus \Delta) \setminus A$. Fix an enumeration $\{\x\nn\}_{n \in \N}$ on $B$. Note the following descriptions of the sets $A$ and $B$ in terms of how they should be matched by the limiting bijection: $A$ contains all the off-diagonal points of $X$ that are matched to off-diagonals in $Y$,  and $B$ contains all the off-diagonals of $X$ matched to diagonals in $Y$. In particular, $X_1 = A \sqcup B \sqcup Q$.

Each point in $Y_0^{\offd}$ has finite multiplicity, because otherwise we would have $Y \not\in\dgml$. Thus for any $\x \in A$,  $\lp \Phi_{k}(\x) \rp_{k\in K}$ is a bounded sequence in $\R^2 \times \N$ by Lemma \ref{lem:bw-r3}. By the Bolzano-Weierstrass theorem and a diagonal argument as above, we get a subsequence $\lp \Phi_{k}\rp_{k\in L}$ indexed by $L\subseteq K$ converging pointwise on $A$. Since $L\subseteq K$, we have $\lim_{k\r \infty,\, k\in L} \pi_{\R^2}\lp\Phi_{k}(\x)\rp = \Psixy_*(\x)$ for each $\x \in A$. 

Define $\Phi_*:X_1 \r Y$ by writing the following for each $\x \in X_1$:
\[\Phi_* (\x):= 
\begin{cases}
\lim_{k \r \infty,\, k \in L} \Phi_{k}(\x) &: \x \in A\\
\y\ii &: \x \in Q,\, \x = (\pi_{\Delta}(\y\ii),i) \\
(\pi_{\Delta}(\x\ii),i) &: \x \in B,\, \x = \x\ii. 
\end{cases}
\]

\noindent
\textbf{Claim: $\Phi_*|_A: A \r Y_0^{\offd}$ is injective.} Let $\x,\xx \in A$ be such that $\Phi_*(\x) = \Phi_*(\xx)$. Write $\y:= \Phi_*(\x)$. 
By Lemma \ref{lem:bw-r3}, we obtain $\e> 0$ and $k_0 \in \N$ such that $\Phi_{k}(\x), \Phi_{k}(\xx) \in B_{\R^3}(\y,\e)$ for all $ k \geq k_0$, $k \in L$. Thus for such $k$, $\x = \Phi_{k}^{-1}(\y) = \xx$.

\noindent
\textbf{Claim: $\Phi_*|_Q: Q \r Y_0^{\offd}$ is injective.}
Let $\x,\xx \in Q$ be such that $\Phi_*(\x) = \Phi_*(\xx) = \y\ii$. Then $\x = (\pi_{\Delta}(\y\ii),i) = \xx$ by the definition of $\Phi_*$ on $Q$.

\noindent
\textbf{Claim: $\Phi_*|_{A\cup Q}: A\cup Q \r Y_0^{\offd}$ is injective.}
We have already dealt with the cases $\x,\xx \in A$ and $\x,\xx \in Q$. Now we deal with the remaining case.
Let $\x \in A$, $\xx \in Q$ be such that $\Phi_*(\x) = \y\ii = \Phi_*(\xx)$ for some $\y\ii \in Y\setminus \Delta$. By Lemma \ref{lem:bw-r3}, there exists $k_0$ such that for all $k\geq k_0$, $\Phi_k(\x) = \y\ii$. Then for all such $k$, $\Psi^{YX}_k(\y\ii) = \pi_{\R^2}(\x)$, which is bounded away from $\Delta$. On the other hand, since $\Phi_*(\xx) = \y\ii$, we know that $\lim_{k \r \infty, \, k \in L}\Psi^{YX}_k(\y\ii) = \pi_{\R^2}(\xx) = \pi_{\Delta}(\y\ii) \in \Delta$. This is a contradiction.

\noindent
\textbf{Claim: $\Phi_*|_{A\cup Q}: A \cup Q \r Y_0^{\offd}$ is surjective.}
Let $\y\ii \in Y_0^{\offd}$, and consider $\Psiyx_*(\y\ii)$. 
There are two cases: $\Psiyx_*(\y\ii)$ is either off-diagonal or on-diagonal. Suppose first that $\Psiyx_*(\y\ii)$ is off-diagonal. 
Then by an argument similar to that of Lemma \ref{lem:bw-r3}, we obtain $k_0 \in \N$ such that for all $k \geq k_0$, $k \in L$, $\Psiyx_k(\y\ii) = \Psiyx_*(\y\ii)$. Since $X \in \dgml$, there are only finitely many $\x \in X$ such that $\pi_{\R^2}(\x) = \Psiyx_*(\y\ii)$. Let $X_{\y\ii}:= \{\x_1,\x_2,\ldots, \x_n\}$ denote this collection. 

We know that $\Phi\inv_k(\y\ii) \in X_{\y\ii}$ for all $k \geq k_0$, $k \in L$. By the pigeonhole principle, choose a subsequence indexed by $M \subseteq L$ such that $(\Phi\inv_k(\y\ii))_{k \in M,\, k \geq k_0}$ is constant. Let $\x \in X_{\y\ii}$ denote the value of this constant sequence. Then for all $k \geq k_0,\, k \in M$, we have $\Phi_k(\x) = \y\ii$. Since $\x \in A$ and $(\Phi_k)_{k \in L}$ converges pointwise on $A$, we know furthermore that $\Phi_*(\x) = \y\ii$.

Suppose next that $\Psiyx_*(\y\ii)$ is on-diagonal. Then by what we have observed before, $\Psiyx_*(\y\ii) = \pi_{\Delta}(\y\ii) \in \Delta$. By definition, $Q$ contains $(\pi_{\Delta}(\y\ii), i)$, and $\Phi_*$ maps this to $\y\ii$.

\medskip
\noindent
\textbf{Claim: $\Phi_*|_B: B \r Y_0^{\ond}$ is injective.}

First note that the codomain of $\Phi_*|_B$ is not $Y_0^{\ond}$ a priori, because the points in $\im(\Phi_*|_B)$ and $Y_0^{\ond}$ may differ on the $\N$ coordinate. But this is simply a matter of choosing representatives from the $\N$-indexed diagonal points, and we may relabel the $\N$-coordinates of points in $Y_0^\ond$ to have $\im(\Phi_*|_B) \subseteq Y_0^{\ond}$.

To see injectivity, suppose $\x\ii, \x\jj \in B$ are such that $\Phi_*(\x\ii) = \Phi_*(\x\jj)$. Then $(\Psi_*(\x\ii),i) = (\Psi_*(\x\jj),j)$, so $i = j$ and hence $\x\ii = \x\jj$.

Finally we extend $\Phi_*$ to a bijection from $X$ to $Y$ by matching the points of $X\setminus X_1$ to the points of $Y\setminus Y_0$ and $Y_0^{\ond} \setminus \im(\Phi_*|_B)$, all of which are diagonal. We continue writing $\Phi_*:X \r Y$ to denote this bijection. It follows from the construction that $\Phi_*$ satisfies the statement of the theorem. This concludes the proof. \end{proof}

As a corollary of this lemma, we see that $\dgmlpq$ is a geodesic space. This result was already implicit in \cite[Proposition 1]{turner2013means}, where it was stated in the case $p = q \in [1,\infty]$. See also \cite{tmmh} for a different argument in the case $p=q=2$.  
In addition to using the result about existence of geodesics throughout this paper, we specifically use Lemma \ref{lem:limiting-bijection} to prove Theorem \ref{thm:char-II} via Theorem \ref{thm:char-I}.

\begin{corollary} 
\label{cor:geod-exist}
Fix $p,q \in [1,\infty]$. Let $X,Y \in \dgmlpq$. Then there exists a bijection $\Phi:X \r Y$ such that $C_p[l^q](\Phi) = \dlpq(X,Y)$. Thus we immediately have a convex-combination geodesic from $X$ to $Y$. 
\end{corollary}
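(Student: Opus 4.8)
The plan is to feed a minimizing sequence into Lemma \ref{lem:limiting-bijection} to extract the optimal bijection, and then verify directly that the associated convex-combination curve realizes the geodesic distance identity. First I would invoke the definition of $\dlpq(X,Y)$ as an infimum over bijections: there exists a sequence of bijections $\Phi_k : X \r Y$ with $C_p[l^q](\Phi_k) \to \dlpq(X,Y)$ as $k \to \infty$. This is precisely the hypothesis of Lemma \ref{lem:limiting-bijection}, so applying that lemma yields a limiting bijection $\Phi_* : X \r Y$ with $C_p[l^q](\Phi_*) = \dlpq(X,Y)$. Setting $\Phi := \Phi_*$ establishes the first assertion.

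For the second assertion I would form the convex-combination curve $\gamma(t) := \set{(1-t)\x + t\Phi(\x) : \x \in X}$, with the indexing coordinate handled exactly as in Section \ref{sec:OT} so that $\gamma$ is continuous at $t = 1$. To prove the geodesic identity $\dlpq(\gamma(s),\gamma(t)) = |t-s|\,\dlpq(X,Y)$, I would argue by a two-sided estimate. For the upper bound, consider the natural bijection between $\gamma(s)$ and $\gamma(t)$ sending $(1-s)\x + s\Phi(\x) \mapsto (1-t)\x + t\Phi(\x)$; it moves each point a distance $|t-s|\,\norm{\x - \Phi(\x)}_q$, so its $C_p[l^q]$ cost is exactly $|t-s|\,C_p[l^q](\Phi) = |t-s|\,\dlpq(X,Y)$ (this computation is uniform across $p \in [1,\infty]$, the $p = \infty$ case using $\sup$ in place of the $l^p$ sum). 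Hence $\dlpq(\gamma(s),\gamma(t)) \leq |t-s|\,\dlpq(X,Y)$; taking $s = 0$ and applying the triangle inequality with $\gamma(0) = X$ shows each $\gamma(t)$ lies in $\dgmlpq$, and the Lipschitz estimate guarantees continuity.

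For the lower bound I would apply the triangle inequality along the chain $\gamma(0),\gamma(s),\gamma(t),\gamma(1)$ for $s \leq t$: combining $\dlpq(\gamma(0),\gamma(s)) \leq s\,\dlpq(X,Y)$ and $\dlpq(\gamma(t),\gamma(1)) \leq (1-t)\,\dlpq(X,Y)$ with the exact value $\dlpq(\gamma(0),\gamma(1)) = \dlpq(X,Y)$ forces $\dlpq(\gamma(s),\gamma(t)) \geq (t-s)\,\dlpq(X,Y)$. The two estimates together give the geodesic identity, which also justifies the forward reference to this corollary made in Section \ref{sec:OT}. I expect no genuine obstacle here, since the substantive analytic work is packaged inside Lemma \ref{lem:limiting-bijection}; the only points requiring care are the bookkeeping of the $\N$-coordinate to secure continuity at the endpoint and checking that the point-to-point correspondence between $\gamma(s)$ and $\gamma(t)$ is an honest bijection of the underlying diagrams, which it is by construction.
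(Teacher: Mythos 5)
Your proposal is correct and follows essentially the same route as the paper: extract an optimal bijection from a minimizing sequence via Lemma \ref{lem:limiting-bijection}, then show the convex-combination curve satisfies $\dlpq(\g(s),\g(t)) \leq |t-s|\,\dlpq(X,Y)$ via the natural point-to-point bijection. The only difference is cosmetic --- the paper cites an external lemma for the fact that this one-sided inequality forces equality, whereas you spell out the underlying triangle-inequality argument along the chain $\g(0),\g(s),\g(t),\g(1)$ explicitly.
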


\begin{proof}[Proof of Corollary \ref{cor:geod-exist}] 
Lemma \ref{lem:limiting-bijection} yields an optimal bijection $\Phi$. Let $\g$ denote the associated convex combination curve. To conclude, we need to show that $\g$ is geodesic. Let $s,t \in [0,1]$. To compare $\g(s)$ and $\g(t)$, consider the bijection associating $(1-t)x+ t\Phi(x)$ with $(1-s)x + s\Phi(s)$. Then we have:
\begin{align*}
d_p[l^q](\g(s),\g(t)) &\leq \lp \sum_{x \in X} \lnorm (1-t)x+ t\Phi(x) - (1-s)x- s\Phi(x) \rnorm_q^p \rp^{1/p} \\
&= \lp \sum_{x \in X} \lnorm (s-t) (x - \Phi(x) ) \rnorm_q^p \rp^{1/p} \\
&= |t-s| \lp \sum_{x\in X} \lnorm x - \Phi(x) \rnorm_q^p \rp^{1/p} = |t-s| d_p[l^q](X,Y).
\end{align*}
By a property of geodesics, showing the inequality is sufficient to guarantee equality (cf. \cite[Lemma 1.3]{dgh-era}). This concludes the proof. \end{proof}

\subsection{Lemmas related to the characterization of geodesics}

{

The proof of Theorem \ref{thm:char-I} will follow the strategy used by Sturm in proving an analogous result about geodesics in the space of metric measure spaces \cite{sturm2012space}. We first present a sequence of lemmas.

\begin{lemma}[Clarkson's inequality, see \cite{clarkson1936uniformly, boas1940some}] 
\label{lem:clarkson}
Let $p \in [2,\infty)$, and let $v,w \in l^p$. Then,
\begin{align*} 
\norm{ v + w}_p^p + \norm{v-w}^p_p & \leq 2^{p-1}\lp \norm{v}_p^p + \norm{w}_p^p\rp.
\end{align*}
\end{lemma}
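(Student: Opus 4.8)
The plan is to reduce the claimed $l^p$ inequality to a single coordinatewise (scalar) inequality and then sum. Since $v,w \in l^p$ are sequences with entries $v_i,w_i \in \C$ (identified with $\R^2$, where $|\cdot|$ is the Euclidean modulus — this is precisely where the $q=2$ structure enters), I would first establish the pointwise estimate
\[ |v_i + w_i|^p + |v_i - w_i|^p \leq 2^{p-1}\lp |v_i|^p + |w_i|^p \rp \quad \text{for every } i \in \N, \]
and then sum over $i$. Because $v,w \in l^p$, and hence $v \pm w \in l^p$ by the triangle inequality, every series involved converges, so summing the pointwise bound yields $\norm{v+w}_p^p + \norm{v-w}_p^p \leq 2^{p-1}(\norm{v}_p^p + \norm{w}_p^p)$ at once. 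The summation step is routine; all the content sits in the scalar inequality.

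For the scalar inequality I would exploit the Hilbert-space structure of the ground space. First, the parallelogram law for the Euclidean norm gives $|v_i+w_i|^2 + |v_i-w_i|^2 = 2(|v_i|^2 + |w_i|^2)$. Setting $\theta := p/2 \geq 1$, I would then apply superadditivity of $r \mapsto r^\theta$ on $[0,\infty)$, i.e. $x^\theta + y^\theta \leq (x+y)^\theta$ for $x,y \geq 0$ (immediate after normalizing $x+y=1$ and using $x^\theta \leq x$ on $[0,1]$), to get
\[ |v_i+w_i|^p + |v_i-w_i|^p = (|v_i+w_i|^2)^{\theta} + (|v_i-w_i|^2)^{\theta} \leq \big(|v_i+w_i|^2 + |v_i-w_i|^2\big)^{\theta} = \big(2(|v_i|^2 + |w_i|^2)\big)^{\theta}. \]
Finally I would invoke convexity of $r \mapsto r^\theta$ once more, as Jensen's inequality $\big(\tfrac{\alpha^2+\beta^2}{2}\big)^{\theta} \leq \tfrac{\alpha^p + \beta^p}{2}$ with $\alpha = |v_i|,\, \beta = |w_i|$, which rearranges to $(|v_i|^2 + |w_i|^2)^{\theta} \leq 2^{\theta-1}(|v_i|^p + |w_i|^p)$. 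Chaining the three estimates produces $2^{\theta}\cdot 2^{\theta-1}(|v_i|^p + |w_i|^p) = 2^{p-1}(|v_i|^p + |w_i|^p)$, as needed.

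The argument is essentially bookkeeping once the three ingredients — the parallelogram law, superadditivity, and Jensen's inequality for $r \mapsto r^{p/2}$ — are aligned, and each holds precisely because $p \geq 2$ (so $\theta \geq 1$) and the ground norm is Euclidean. The only real subtlety worth flagging is that this route is special to the $l^2$ ground metric: the parallelogram identity has no analogue for a general $l^q$ ground norm, which is exactly the difficulty alluded to in the remark preceding this lemma. Since the statement is classical, in practice I would cite \cite{clarkson1936uniformly, boas1940some} and record the above chain only for completeness.
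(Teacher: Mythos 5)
Your proof is correct. The paper itself does not prove this lemma at all --- it simply cites Clarkson and Boas --- so any complete argument goes beyond what the paper records; yours is a clean and standard derivation of the coordinatewise (scalar) Clarkson inequality followed by summation. Each of the three ingredients checks out: the parallelogram identity holds for the Euclidean modulus on $\C \cong \R^2$; superadditivity $x^\theta + y^\theta \leq (x+y)^\theta$ for $\theta = p/2 \geq 1$ is correct (normalize $x+y=1$); and the power-mean step $(\alpha^2+\beta^2)^\theta \leq 2^{\theta-1}(\alpha^p+\beta^p)$ is exactly Jensen for the convex map $r \mapsto r^\theta$. The constants chain to $2^{\theta}\cdot 2^{\theta-1} = 2^{p-1}$ as you say, and summing the nonnegative pointwise bounds is legitimate because $v,w \in l^p$ forces convergence of the right-hand series. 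One small remark: the paper invokes the lemma in two settings --- for $p=q$ the ``vectors'' are points of $\R^2$ with the $l^p$ norm on two real coordinates, while for $q=2$ they are genuinely infinite sequences of complex entries with Euclidean modulus --- and your argument covers both, since the scalar inequality you prove holds for entries in any inner-product space (for real entries the parallelogram law is just the algebraic identity $(a+b)^2+(a-b)^2 = 2(a^2+b^2)$). Your closing observation that the parallelogram law is the step that breaks for a general $l^q$ ground norm matches the difficulty the paper flags in the remark preceding the lemma.
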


\begin{lemma}[Application of Lemma \ref{lem:clarkson}]
\label{ineq:sturm-2}
Let $t\in (0,1)$, $p \in [2,\infty)$. Then there exists a constant $C>0$ depending on $p$ and $t$ such that for any $v, w \in l^p$, we have
\[ \norm{tv + (1-t)w}_p^p \leq t\norm{v}_p^p +(1-t)\norm{w}_p^p - t(1-t)C\norm{v-w}_p^p.\]
\end{lemma}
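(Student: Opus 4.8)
The plan is to introduce the \emph{convexity deficit} of the $p$-th power of the norm and to exploit its concavity in the interpolation parameter, so as to promote the midpoint estimate furnished by Clarkson's inequality to an estimate valid for every $t \in (0,1)$. Throughout, set $f(u) := \norm{u}_p^p$ on $l^p$; since $\norm{\cdot}_p$ is a norm and $x \mapsto x^p$ is convex and increasing on $[0,\infty)$ for $p \geq 1$, the function $f$ is convex. (Equivalently, $f$ is a sum of the convex coordinate functions $u \mapsto |u_i|^p$, so no separate treatment of real versus complex coordinates is needed.)

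First I would record the midpoint case directly from Lemma \ref{lem:clarkson}. Applying that inequality to $v,w$ and dividing by $2^p$, and using $\norm{\frac{v-w}{2}}_p^p = 2^{-p}\norm{v-w}_p^p$, gives
\[ \norm{\tfrac{v+w}{2}}_p^p + 2^{-p}\norm{v-w}_p^p \;\leq\; \tfrac12\norm{v}_p^p + \tfrac12\norm{w}_p^p,\]
which is exactly $f\!\lp\tfrac{v+w}{2}\rp \leq \tfrac12 f(v) + \tfrac12 f(w) - 2^{-p}\norm{v-w}_p^p$. This is the $t=\tfrac12$ instance of the desired bound, and it is the only place where the hypothesis $p \geq 2$ is used.

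Next I would define the deficit $g(s) := s f(v) + (1-s) f(w) - f\lp sv + (1-s)w \rp$ for $s \in [0,1]$. Because $s \mapsto sv + (1-s)w$ is affine and $f$ is convex, the map $s \mapsto f(sv+(1-s)w)$ is convex, so $g$ (an affine function minus a convex one) is \emph{concave} on $[0,1]$; moreover $g(0) = g(1) = 0$, and $g \geq 0$ by convexity of $f$. The midpoint computation above reads precisely $g(\tfrac12) \geq 2^{-p}\norm{v-w}_p^p$. A concave function on $[0,1]$ vanishing at both endpoints lies above each of its chords, so writing $t = (1-2t)\cdot 0 + 2t\cdot\tfrac12$ for $t \leq \tfrac12$ (and the symmetric combination of $1$ and $\tfrac12$ for $t \geq \tfrac12$) yields $g(t) \geq 2\min(t,1-t)\,g(\tfrac12)$. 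Combining this with $g(\tfrac12)\geq 2^{-p}\norm{v-w}_p^p$ and the elementary inequality $\min(t,1-t) \geq t(1-t)$ gives $g(t) \geq 2^{1-p}\,t(1-t)\,\norm{v-w}_p^p$. Unwinding the definition of $g(t)$ then produces the claim with the explicit constant $C = 2^{1-p}$ (in fact independent of $t$, which is slightly stronger than what is asserted).

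The main obstacle is conceptual rather than computational: the one non-mechanical step is recognizing that the interpolation deficit $g$ is concave, since this is what lets a single midpoint inequality control the full family of convex combinations. Once that observation is in place, Clarkson's inequality supplies the midpoint bound and the remaining estimates ($g(t)\geq 2\min(t,1-t)g(\tfrac12)$ and $\min(t,1-t)\geq t(1-t)$) are routine. I would emphasize that the argument uses only the additivity of $\norm{\cdot}_p^p$ over coordinates together with the midpoint inequality, so it extends verbatim to any setting in which a Clarkson-type midpoint estimate holds.
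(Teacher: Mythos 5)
Your proof is correct, but it takes a genuinely different route from the paper. The paper also starts from the midpoint case supplied by Clarkson's inequality, but then extends to general $t$ by a dyadic induction: it proves the estimate for $t=a/2^b$ by repeatedly bisecting and reapplying Clarkson, with a constant $C(b,p)$ tracked through the induction, and finally appeals to the density of the dyadic rationals. Your argument replaces that entire induction with one structural observation: the interpolation deficit $g(s)= s\norm{v}_p^p+(1-s)\norm{w}_p^p-\norm{sv+(1-s)w}_p^p$ is concave in $s$ (affine minus convex, using convexity of $u\mapsto\norm{u}_p^p$), vanishes at the endpoints, and is bounded below at $s=\tfrac12$ by Clarkson; the chord inequality $g(t)\geq 2\min(t,1-t)\,g(\tfrac12)$ together with $\min(t,1-t)\geq t(1-t)$ then yields the claim with the explicit constant $C=2^{1-p}$. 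All the individual steps check out (the normalization of Clarkson, the concavity of $g$, and the two elementary inequalities). What your approach buys is twofold: a constant that is uniform in $t$, which is strictly stronger than the statement and removes any need to worry about how the constant degrades along the dyadic tree; and the elimination of the density/continuity step and of the inductive verification that the paper leaves to ``hand or a computer algebra package.'' The same concavity device also streamlines the companion Lemma \ref{ineq:sturm-3}, where the deficit $s\norm{v}_p^2+(1-s)\norm{w}_p^2-\norm{sv+(1-s)w}_p^2$ is likewise concave and the BCL inequality supplies the midpoint bound.
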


\begin{lemma}[BCL inequality, see \cite{ball1994sharp} Proposition 3]
\label{lem:bcl}
Let $p \in (1,2]$, and let $v,w \in l^p$. Then, 
\[\norm{ v + w}_p^2 + \norm{v-w}_p^2 \geq 2\norm{v}_p^2 + 2(p-1)\norm{w}_p^2.\]
\end{lemma}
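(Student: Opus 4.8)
The plan is to recognize Lemma~\ref{lem:bcl} as the sharp $2$-uniform convexity inequality for $l^p$ and to reduce it to a single one-variable scalar inequality. Since $v,w \in l^p$, all norms below are finite; the case $\norm{w}_p = 0$ is trivial, so I assume $\norm{w}_p > 0$.

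First I would pass from squared norms to $p$-th powers by convexity. Because $p \in (1,2]$ we have $2/p \ge 1$, so $u \mapsto u^{2/p}$ is convex and increasing on $[0,\infty)$, and Jensen's inequality applied to $\norm{v+w}_p^p$ and $\norm{v-w}_p^p$ with equal weights gives
\[\norm{v+w}_p^2 + \norm{v-w}_p^2 \;\ge\; 2\lp \frac{\norm{v+w}_p^p + \norm{v-w}_p^p}{2} \rp^{2/p}.\]
Next I would invoke Hanner's inequality (valid for $1 \le p \le 2$; see \cite{ball1994sharp}), namely $\norm{v+w}_p^p + \norm{v-w}_p^p \ge (\norm{v}_p + \norm{w}_p)^p + \lbar \norm{v}_p - \norm{w}_p \rbar^p$. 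Combining these two steps and writing $a := \norm{v}_p$, $b := \norm{w}_p \ge 0$, the claim reduces to the purely scalar statement
\[\lp \frac{(a+b)^p + |a-b|^p}{2} \rp^{2/p} \;\ge\; a^2 + (p-1)b^2.\]

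It remains to prove this scalar inequality for $a,b \ge 0$. By homogeneity of degree two I would normalize $a = 1$ and set $x = b \ge 0$, reducing to $g(x) \ge 0$ on $[0,\infty)$, where $g(x) := \lp \tfrac{(1+x)^p + |1-x|^p}{2} \rp^{2/p} - 1 - (p-1)x^2$. A Taylor expansion at the origin gives $\tfrac{(1+x)^p + (1-x)^p}{2} = 1 + \tfrac{p(p-1)}{2}x^2 + O(x^4)$, hence the bracket raised to the power $2/p$ equals $1 + (p-1)x^2 + O(x^4)$; thus $g(0) = g'(0) = g''(0) = 0$, and the constant $p-1$ is exactly the value for which $g$ is tangent to zero at the origin. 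At the other extreme $g(x) \sim (2-p)x^2 \to +\infty$ as $x \to \infty$ since $p < 2$. I would finish by a monotonicity analysis of $g$ on $(0,\infty)$---differentiating and showing $g'$ changes sign at most once---to exclude any interior dip below zero.

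The main obstacle is precisely this last scalar estimate. The reduction via convexity and Hanner's inequality is clean, but the sharp constant $p-1$ leaves no slack at $x = 0$, so the expansion of $g$ must be carried to the correct order and the global sign analysis handled with care. This tightness is exactly what distinguishes Lemma~\ref{lem:bcl} from its companion Lemma~\ref{lem:clarkson} for $p \ge 2$, where both sides carry matching exponents $p$ and no root intervenes.
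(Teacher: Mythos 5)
The paper does not actually prove this lemma: it is imported verbatim from \cite{ball1994sharp} (Proposition~3), so there is no in-paper argument to compare against, and the relevant benchmark is the Ball--Carlen--Lieb proof itself. Your reduction steps are correct as far as they go: convexity of $u \mapsto u^{2/p}$ (since $2/p \ge 1$) gives your first display, Hanner's inequality does point in the stated direction for $1 \le p \le 2$, and monotonicity of $u \mapsto u^{2/p}$ then legitimately reduces the lemma to the scalar inequality
\[
\left( \frac{(a+b)^p + |a-b|^p}{2} \right)^{2/p} \;\ge\; a^2 + (p-1)b^2, \qquad a,b \ge 0.
\]
(Two small slips: the degenerate case to set aside before normalizing $a=1$ is $\norm{v}_p = 0$, not $\norm{w}_p = 0$; and the asymptotic $g(x) \sim (2-p)x^2 \to \infty$ is vacuous at $p=2$, where the lemma degenerates to the parallelogram identity and $g \equiv 0$.)

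The genuine gap is that this scalar inequality \emph{is} the entire content of the theorem, and you do not prove it. Your sketch establishes only that $g$ vanishes to second order at $0$ and tends to $+\infty$; the assertion that ``$g'$ changes sign at most once'' is precisely the hard global statement and is left unexecuted. It is not a routine calculus check: $g$ has a kink at $x=1$ (for $p<2$ the term $|1-x|^p$ is not $C^2$ there), and near $0$ one must actually compute the fourth-order coefficient --- it equals $\tfrac{p(p-1)(2-p)}{6}$ --- to know that $g>0$ on a punctured neighborhood of the origin, since the quadratic tangency you exhibit is consistent with $g$ dipping negative. The standard route (the one in \cite{ball1994sharp}) proves the scalar bound for $0 \le x \le 1$ by a term-by-term comparison of the power series of $\tfrac{(1+x)^p+(1-x)^p}{2}$ and $(1+(p-1)x^2)^{p/2}$, and then deduces the range $x>1$ from $x<1$ via the substitution $x \mapsto 1/x$ together with degree-two homogeneity and $p \le 2$; some argument of this caliber must replace the unproven monotonicity claim. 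Finally, note that routing through Hanner's inequality buys you little in the way of elementarity, since Hanner is itself a result of comparable depth; Ball--Carlen--Lieb avoid it entirely by applying the same scalar inequality coordinatewise and concluding with the reverse Minkowski inequality for the exponent $p/2 \le 1$.
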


\begin{lemma}[Application of BCL inequality]
\label{ineq:sturm-3}
Let $t\in (0,1)$, $p \in (1,2]$. Then for any vectors $v, w \in l^p$, we have
\[ \norm{tv + (1-t)w}_p^2 \leq  t\norm{v}_p^2 +(1-t)\norm{w}_p^2 - (p-1)t(1-t)\norm{v-w}_p^2.\]
\end{lemma}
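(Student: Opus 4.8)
The plan is to derive Lemma \ref{ineq:sturm-3} directly from the BCL inequality (Lemma \ref{lem:bcl}) by making the change of variables that symmetrizes the convex combination $tv + (1-t)w$. The natural substitution is to write the convex combination in terms of the midpoint-and-difference coordinates adapted to the weight $t$. Concretely, I would set $a := tv + (1-t)w$ and $b := v - w$, so that $v = a + (1-t)b$ and $w = a - tb$. The goal $\norm{a}_p^2 \leq t\norm{v}_p^2 + (1-t)\norm{w}_p^2 - (p-1)t(1-t)\norm{b}_p^2$ then becomes an inequality purely in $a$ and $b$, and the hope is that BCL, applied to a suitable pair of vectors, yields exactly this after expanding the right-hand side.

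First I would expand $t\norm{v}_p^2 + (1-t)\norm{w}_p^2$ using the parallelogram-type bookkeeping. The key subtlety is that BCL is stated for the symmetric combination $\norm{v+w}_p^2 + \norm{v-w}_p^2 \geq 2\norm{v}_p^2 + 2(p-1)\norm{w}_p^2$, which is the $t = 1/2$ case up to scaling, whereas here $t$ is arbitrary. So the honest route is to apply BCL not to $v,w$ directly but to vectors chosen so that the asymmetric weight $t$ is absorbed. A clean way to do this is to take the two vectors $v' := a$ and $w' := b\sqrt{t(1-t)}$ scaled appropriately, or alternatively to invoke the sharp constant directly: the BCL inequality is equivalent to the statement that the function $x \mapsto \norm{x}_p^2$ on $l^p$ is $2(p-1)$-strongly convex in the sense that $\norm{tv + (1-t)w}_p^2 \leq t\norm{v}_p^2 + (1-t)\norm{w}_p^2 - (p-1)t(1-t)\norm{v-w}_p^2$ for all $t \in (0,1)$. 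I would make this equivalence explicit: strong convexity with modulus $(p-1)$ for general $t$ follows from the $t=1/2$ case (which is precisely Lemma \ref{lem:bcl}) by a standard argument, either via the midpoint-convexity-implies-full-convexity bootstrap combined with continuity, or more directly by applying Lemma \ref{lem:bcl} to the pair of vectors whose midpoint is $tv + (1-t)w$.

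The cleanest concrete execution I would attempt is the following. Apply Lemma \ref{lem:bcl} with the substitution $v \mapsto (1-t)(v-w)$ wrapped around the midpoint — that is, recognize that $tv + (1-t)w$ is the midpoint of $v - (1-t)(v-w)\cdot(\text{something})$ — but since the midpoint argument only gives the $t=1/2$ coefficient, I expect the genuinely robust derivation to go through the strong-convexity characterization. I would state that the inequality in Lemma \ref{lem:bcl} is well known to be equivalent (see \cite{ball1994sharp}) to $(p-1)$-uniform convexity of the square norm, and then the claimed inequality is just that uniform convexity written out for the weight $t$. The analogous Lemma \ref{ineq:sturm-2} in the range $p \geq 2$ presumably follows the same template from Clarkson, so I would mirror whatever bookkeeping is used there.

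The main obstacle is the passage from the symmetric ($t=1/2$) BCL inequality to the fully weighted version for arbitrary $t \in (0,1)$ while retaining the \emph{sharp} constant $(p-1)$ rather than a weaker $t$-dependent constant. The midpoint inequality alone does not immediately give the weighted one with the optimal coefficient; one must either iterate/bootstrap (which naively loses sharpness) or appeal to the fact that BCL is genuinely a statement of $2$-uniform convexity with modulus $(p-1)$, from which the weighted form is immediate. I expect the correct and shortest argument to be to cite the uniform-convexity formulation from \cite{ball1994sharp} directly, so that Lemma \ref{ineq:sturm-3} reduces to unwinding definitions rather than to any further estimate. If a self-contained derivation is wanted, the fallback is a short computation applying Lemma \ref{lem:bcl} to the two vectors $v - (tv+(1-t)w)$ and $w - (tv+(1-t)w)$, i.e. $(1-t)(v-w)$ and $-t(v-w)$, and then controlling the resulting $\norm{v-w}_p^2$ terms — but this again only reaches the sharp constant via the uniform-convexity identity, which is why I regard that identity as the crux.
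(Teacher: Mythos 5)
Your base case is exactly the paper's: apply Lemma \ref{lem:bcl} to $c=(v+w)/2$ and $d=(v-w)/2$ to get the midpoint inequality $\norm{\tfrac{v}{2}+\tfrac{w}{2}}_p^2\leq\tfrac{1}{2}\norm{v}_p^2+\tfrac{1}{2}\norm{w}_p^2-(p-1)\tfrac{1}{4}\norm{v-w}_p^2$. Where you part ways with the paper is on the passage to general $t$: you correctly identify this as the crux, but you assert that iterating the midpoint inequality ``naively loses sharpness'' and therefore retreat to citing a uniform-convexity formulation as a black box. That pessimism is unfounded, and the paper's proof is precisely the dyadic bootstrap you distrust (the same induction used for Lemma \ref{ineq:sturm-2}). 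The induction preserves the constant $(p-1)$ exactly: if the weighted inequality holds at $t_1$ and $t_2$ with constant $K$, applying the midpoint inequality at $t=(t_1+t_2)/2$ to the points $t_1v+(1-t_1)w$ and $t_2v+(1-t_2)w$ produces the coefficient
\[
K\cdot\tfrac{1}{2}\bigl(t_1(1-t_1)+t_2(1-t_2)\bigr)+K\cdot\tfrac{1}{4}(t_2-t_1)^2 \;=\; K\,t(1-t)
\]
in front of $-\norm{v-w}_p^2$, as one checks by writing $t_1=t-d$, $t_2=t+d$. So midpoint $K$-strong convexity upgrades to $K$-strong convexity at every dyadic $t$ with no loss, and continuity in $t$ finishes the argument. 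Two further cautions on your alternatives: citing \cite{ball1994sharp} for the weighted form is somewhat circular as written, since Proposition 3 there is the two-point (midpoint) inequality and the weighted form is extracted from it by exactly this bootstrap; and your fallback of applying Lemma \ref{lem:bcl} to the pair $(1-t)(v-w)$ and $-t(v-w)$ collapses to a scalar identity in $\norm{v-w}_p^2$ with no convex combination on the left-hand side, so it cannot reach the conclusion on its own.
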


\begin{lemma}[Application of Jensen's inequality]
\label{ineq:sturm-1}
Let $a_1,\ldots, a_n \in [0,\infty)$, $t_0 < t_1 < \ldots < t_n \in \R$, and let $p \in (1,\infty)$. Then,
\[\frac{1}{(t_n - t_0)^{p-1}} \lp \sum_{i=1}^na_i \rp^p \leq \sum_{i=1}^n\frac{a_i^p}{(t_i-t_{i-1})^{p-1}}.\]
\end{lemma}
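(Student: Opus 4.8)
The plan is to invoke Jensen's inequality for the convex function $\phi(x) = x^p$ (which is convex on $[0,\infty)$ since $p > 1$), using a probability measure whose weights are proportional to the lengths of the consecutive intervals. First I would abbreviate $\Delta_i := t_i - t_{i-1} > 0$ and $T := t_n - t_0 = \sum_{i=1}^n \Delta_i$, and introduce the weights $w_i := \Delta_i / T$, which satisfy $w_i > 0$ and $\sum_{i=1}^n w_i = 1$. The key substitution is to write each $a_i$ as $a_i = \Delta_i \cdot (a_i / \Delta_i)$, so that $\frac{1}{T} \sum_i a_i = \sum_i w_i (a_i/\Delta_i)$ is precisely a convex combination of the values $b_i := a_i/\Delta_i$.

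Then Jensen's inequality gives $\phi\big(\sum_i w_i b_i\big) \leq \sum_i w_i \phi(b_i)$, that is,
\[ \lp \frac{1}{T}\sum_{i=1}^n a_i \rp^p \leq \sum_{i=1}^n \frac{\Delta_i}{T} \lp \frac{a_i}{\Delta_i} \rp^p = \frac{1}{T}\sum_{i=1}^n \frac{a_i^p}{\Delta_i^{p-1}}. \]
Multiplying both sides by $T = (t_n - t_0)$ and recalling $\Delta_i = t_i - t_{i-1}$ yields exactly the claimed bound $\frac{1}{(t_n-t_0)^{p-1}}(\sum_i a_i)^p \leq \sum_i a_i^p/(t_i - t_{i-1})^{p-1}$.

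Alternatively, and equivalently, one can run H\"older's inequality with conjugate exponents $p$ and $p/(p-1)$ on the factorization $a_i = \big(a_i/\Delta_i^{(p-1)/p}\big)\cdot \Delta_i^{(p-1)/p}$; the weight factor then collapses to $\big(\sum_i \Delta_i\big)^{(p-1)/p} = T^{(p-1)/p}$, and raising to the $p$-th power reproduces the same inequality. I do not anticipate any genuine obstacle here: all $\Delta_i$ are strictly positive by the strict ordering $t_0 < t_1 < \cdots < t_n$, so the weights and the substitution are well-defined, and the convexity of $x \mapsto x^p$ for $p > 1$ is all that the argument requires. The only point meriting a word of care is the degenerate possibility that some $a_i = 0$, but $\phi(0) = 0$ causes no difficulty, so the argument goes through verbatim.
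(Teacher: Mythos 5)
Your proof is correct and is essentially identical to the paper's: the paper also sets $\lambda_i = (t_i - t_{i-1})/(t_n - t_0)$ and $x_i = a_i/(t_i - t_{i-1})$ and applies Jensen's inequality for the convex function $x \mapsto x^p$, then clears the factor of $t_n - t_0$. The H\"older variant you mention is a standard equivalent reformulation but is not needed.
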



\newcommand{\tb}{\tfrac{2a}{2^{b+1}}}
\newcommand{\tbo}{\tfrac{2a+1}{2^{b+1}}}
\newcommand{\tbt}{\tfrac{2a+2}{2^{b+1}}}
\newcommand{\cb}{\tfrac{1}{C(b,p)}}
\newcommand{\cbo}{\tfrac{1}{C(b+1,p)}}
\newcommand{\ft}{\tfrac{v}{2}}
\newcommand{\gt}{\tfrac{w}{2}}
\newcommand{\ont}{\tfrac{1}{2}}
\newcommand{\ontbo}{\tfrac{1}{2^{b+1}}}

\begin{proof}[Proof of Lemma \ref{ineq:sturm-1}]

For each $1\leq i \leq n$, write $\lambda_i = \tfrac{t_i - t_{i-1}}{t_n - t_0}$ and $x_i = \tfrac{a_i}{t_i-t_{i-1}}$. Notice that $\sum_{i=1}^n \lambda_i =\sum_{i =1}^n\tfrac{t_i - t_{i-1}}{t_n - t_0} = 1$.

By Jensen's inequality, 
\begin{align*}
\lp \sum_{i=1}^n \lambda_ix_i\rp^p 
&= \lp \sum_{i=1}^n\frac{a_i}{t_i - t_{i-1}}\cdot \frac{t_i-t_{i-1}}{t_n-t_0} \rp^p
= \frac{1}{(t_n-t_0)^p}\lp \sum_{i=1}^na_i \rp^p \\
& \leq 
 \sum_{i=1}^n \lp \frac{a_i}{t_i - t_{i-1}}\rp^p \cdot \frac{t_i - t_{i-1}}{t_n - t_0}
= \frac{1}{t_n - t_0}\sum_{i=1}^n\frac{a_i^p}{(t_i - t_{i-1})^{p-1}}.
\end{align*}

This verifies Lemma (\ref{ineq:sturm-1}). \end{proof}

\begin{proof}[Proof of Lemma \ref{ineq:sturm-2}]
It suffices to show the inequality for dyadic rationals in the unit interval, and then invoke the density of the dyadic rationals. We consider dyadic rationals of the form $t=a/2^b$, for integers $b\geq 0$ and $0\leq a \leq 2^b$. The proof is by induction, based on the following inductive hypothesis: for each $b \in \N$, there exists a constant $C>0$ depending on $b$ and $p$ such that:
\[\lnorm  (\tfrac{a}{2^{b}})v + (1-\tfrac{a}{2^{b}})w \rnorm_p^p \leq (1- \tfrac{a}{2^{b}})\lnorm w \rnorm^p_p + (\tfrac{a}{2^{b}})\lnorm v \rnorm^p_p - C(1- \tfrac{a}{2^{b}})(\tfrac{a}{2^{b}})\lnorm w-v \rnorm^p_p.\]

For the base case, we use Lemma \ref{lem:clarkson} (a sharper result is obtained via the parallelogram law for $p=2$):

\begin{align*}
\norm{\ft + \gt}^p_p \leq  2^{p-1}\norm{\ft}^p_p + 2^{p-1}\norm{\gt}^p_p - \norm{\ft - \gt}^p_p
&= \ont\norm{v}^p_p + \ont\norm{w}^p_p - \tfrac{1}{2^p} \norm{v - w}^p_p\\
&= \ont\norm{v}^p_p + \ont\norm{w}^p_p - C \cdot \ont \cdot \ont \norm{v - w}^p_p.
\end{align*}

Suppose the inductive hypothesis is true up to some $b \in \N$ and all $0\leq a \leq 2^b$. This means that at the $(b+1)$th step, the inductive hypothesis is true for all $2a/2^{b+1}$, where $0 \leq a \leq 2^b$. Fix $a$ in this range, and consider the midpoint $(2a+1)/2^{b+1}$ of $2a/2^{b+1}$ and $(2a+2)/2^{b+1}$. For the inductive step, we have:
\begin{align*}
\norm{ \tbo v + \lp 1 - \tbo \rp w }^p_p 
&= \norm{ \ont \lp \tb v + \tbt v \rp + \ont \lp \lp 1 - \tb\rp w + \lp 1 - \tbt \rp w \rp }^p_p\\
&= \norm{ \ont \lp \tb v + \lp 1-\tb \rp w \rp + \ont \lp \tbt v + \lp 1 - \tbt \rp w \rp  }_p^p   \\
&\leq 2^{p-1}\norm{ \ont \lp \tb v + \lp 1 - \tb \rp w \rp }^p_p + 2^{p-1}\norm{ \ont \lp \tbt v + \lp 1 - \tbt \rp w \rp }^p_p\\
& \phantom{\hspace{0.5 in}} - \norm{ \ontbo w - \ontbo v }^p_p.
\end{align*}

Here we used Lemma \ref{lem:clarkson} for the inequality, and showed the final term after simplification. The inductive hypothesis can now be applied to the first two terms. Either by hand or a computer algebra package, we see that the inductive step holds for $(b+1)$. This completes the proof of Lemma \ref{ineq:sturm-2}. \qedhere

\end{proof}

\begin{proof}[Proof of Lemma \ref{ineq:sturm-3}]
We again proceed by showing the inequality for dyadic rationals. Writing $t = a/2^b$ as before, the inductive hypothesis now becomes:
\[\lnorm  (\tfrac{a}{2^{b}})v + (1-\tfrac{a}{2^{b}})w \rnorm_p^2 \leq (1- \tfrac{a}{2^{b}})\lnorm w \rnorm^2_p + (\tfrac{a}{2^{b}})\lnorm v \rnorm^2_p - (p-1)(1- \tfrac{a}{2^{b}})(\tfrac{a}{2^{b}})\lnorm w-v \rnorm^2_p.\]

For the base case, we use Lemma \ref{lem:bcl}. Set $c = (v+w)/2$ and $d = (v-w)/2$. Then $c + d =v$ and $c-d = w$, and we have: 

\begin{align*}
\norm{c+d}^2_p + \norm{c-d}^2_p &\geq 2\norm{c}^2_p + 2(p-1)\norm{d}^2_p, \text{ so }\\
\norm{v}^2_p + \norm{w}^2_p &\geq 2\norm{\ft + \gt}^2_p + 2(p-1)\norm{\ft - \gt}^2_p, \text{ and hence}\\
\norm{\ft + \gt}^2_p &\leq \ont\norm{v}^2_p + \ont\norm{w}^2_p - (p-1)(\ont)^2\norm{v-w}^2_p.
\end{align*}

The inductive step then applies as in the proof of Lemma \ref{ineq:sturm-2}.\qedhere

\end{proof}

}

\subsection{Geometric intuition for the proof of Theorem \ref{thm:char-I}}
\label{sec:geom-idea}

The proof of Theorem \ref{thm:char-I} proceeds via a geometric argument about $p$-norms. We abstract away these arguments and present the intuition here.

The setup of Theorem \ref{thm:char-I} involves two persistence diagrams $X$ and $Y$ with finitely many off-diagonal points and a geodesic $\mu:[0,1] \r \dgmlpq$ from $X$ to $Y$. Let $\Phi$ be a bijection between $X$ and $Y$ (we will specify this bijection in the actual proof), and let $\gamma$ be the convex-combination curve in $\dgmlpq$ from $X$ to $Y$ induced by $\Phi$. For each $t \in [0,1]$, let $x_t := (1-t)x + t \Phi(x) \in \gamma(t)$. Fix $t \in (0,1)$. Suppose also that we have a bijection between $\gamma(t)$ and $\mu(t)$ (this will be specified in the proof). Let $\psi(x_t) \in \mu(t)$ denote the image of $x_t$ under this bijection.

In what follows, we will write norms in some $l^q$ space raised to some power $p>1$ (i.e. terms of the form $\norm{\cdot}_q^p$) without specifying the elements of the normed space, but this will be written explicitly in the proof. Our goal is to prove that 
\[ \sum_{x \in X} \lnorm x_t - \psi(x_t) \rnorm_q^p = 0,\] 
which would show that $\dlpq(\gamma(t),\mu(t)) = 0$. To approach this, consider the following quantities:
\[ \lnorm x - \psi(x_t)\rnorm_q^p, \qquad \lnorm \psi(x_t) - \Phi(x) \rnorm_q^p, \qquad \lnorm x - x_t \rnorm_q^p, \qquad \lnorm x_t - \Phi(x) \rnorm_q^p.\]

\begin{figure}
\centering
\begin{tikzpicture}
[node distance = 0pt, X/.style={circle,draw=purple,fill=purple,thick,inner sep = 0pt,minimum size = 6pt},
Y/.style={rectangle,draw=NavyBlue,fill=NavyBlue,thick,inner sep = 0pt,minimum size = 6pt}]
\node[X] (x) at (0,0)  {} ;
\node[X] (phix) at (10,0)  {};
\node[X] (psix) at (4,2)  {};
\node[X] (xt) at (2.5,0)  {};


\draw[->,thick] (phix)--(x);
\draw[->,thick] (phix)--(psix) node[pos=0.65, above right,rotate = -20]{$\psi(x_t) - \Phi(x)$} ;
\draw[->,thick] (psix)--(x) node[midway, above, rotate=30]{$x - \psi(x_t)$};
\draw[->,thick] (psix)--(xt);
\node [below =of x] {$x$};
\node [below left=of phix] {$\Phi(x)$};
\node [below right=of xt] {$x_t = (1-t)x + t\Phi(x)$};
\node [above right=of psix] {$\psi(x_t)$};

\end{tikzpicture}
\caption{Geometric setup for the proof of Theorem \ref{thm:char-I}.}
\label{fig:geom-intuit}
\end{figure}
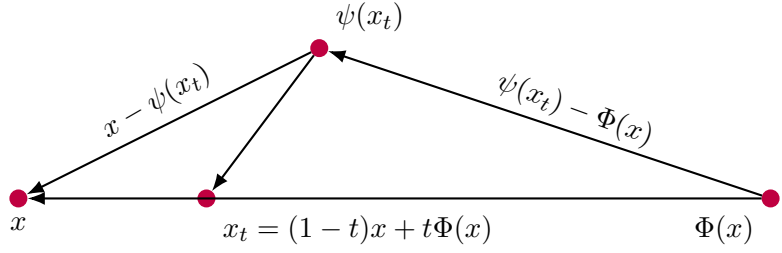

These are of course related geometrically, as suggested by Figure \ref{fig:geom-intuit}. More specifically, write $Q(x):= (x - \psi(x_t))/t$ and $R(x):= (\psi(x_t) - \Phi(x))/(1-t)$. Then the following is true:
\begin{align*}
\lnorm Q(x) - R(x) \rnorm_q^p &= \lnorm \frac{ (1-t)(x - \psi(x_t)) - t(\psi(x_t) - \Phi(x))}{t(1-t)} \rnorm_q^p\\
&= \frac{1}{t^p(1-t)^p} \lnorm (1-t)x + t\Phi(x) - \psi(x_t) \rnorm_q^p \\
&=\frac{1}{t^p(1-t)^p} \lnorm x_t - \psi(x_t) \rnorm_q^p.
\end{align*}

So to show $\sum_{x \in X} \lnorm x_t - \psi(x_t) \rnorm_q^p = 0$, it suffices to show 
\[ \sum_{x \in X} \lnorm Q(x) - R(x) \rnorm_q^p = 0. \]

To obtain $Q(x) - R(x)$, one computes:
\begin{align*}
\dlpq(X,Y)^p &\leq \sum_{x\in X} \lnorm x - \Phi(x) \rnorm_q^p \\
&= \sum_{x \in X} \lnorm x - \psi(x_t) + \psi(x_t) - \Phi(x) \rnorm_q^p\\
&= \sum_{x \in X} \lnorm tQ(x) + (1-t)R(x) \rnorm_q^p.
\end{align*}

Suppose also that one can bound:
\begin{equation}
\lnorm tQ(x) + (1-t)R(x) \rnorm_q^p \leq t \lnorm Q(x) \rnorm_q^p + (1-t) \lnorm R(x) \rnorm_q^p - C\lnorm Q(x) - R(x) \rnorm_q^p
\label{eq:geom-1}
\end{equation}
for some positive constant $C$. If one can show that 
\begin{align*}
\sum_{x\in X} t \lnorm Q(x) \rnorm_q^p + (1-t) \lnorm R(x) \rnorm_q^p = \dlpq(X,Y)^p,
\end{align*}
then by summing over $x\in X$ in Inequality (\ref{eq:geom-1}), we necessarily have $\sum_{x \in X} \lnorm Q(x) - R(x) \rnorm_q^p = 0$, which is what we need.

Notice that if $\psi(x_t) = x_t$ for each $x \in X$, i.e. if $\gamma(t) = \mu(t)$, then
\[\lnorm x - \psi(x_t) \rnorm_q = t\lnorm x - \Phi(x) \rnorm_q \text{ and }  
\lnorm \psi(x_t) - \Phi(x) \rnorm_q = (1-t)\lnorm x - \Phi(x) \rnorm_q, \text{ so} \]
\[
\lnorm Q(x) \rnorm_q = \lnorm x - \Phi(x) \rnorm_q = \lnorm R(x) \rnorm_q.
\]
Thus we have 
\begin{align*}
\sum_{x\in X} t \lnorm Q(x) \rnorm_q^p + (1-t) \lnorm R(x) \rnorm_q^p  
= \sum_{x\in X} t \lnorm x - \Phi(x) \rnorm_q^p + (1-t) \lnorm x - \Phi(x) \rnorm_q^p 
= \dlpq(X,Y)^p.
\end{align*} 

Now we proceed to the main result.

\subsection{Proof of the characterization result}

\begin{proof}[Proof of Theorem \ref{thm:char-I}]

We split the proof into two parts: first we construct bijections $\Phi_{k}$ that induce geodesics $\g_{k}$ which agree with $\mu$ at all $i2^{-k}$, for integers $0 < i < 2^k$. Then we will use the sequence $\lp \Phi_{k} \rp_k$ to construct a ``limiting bijection" that induces a geodesic satisfying the statement of the theorem.

We also alert the reader to certain notational choices we will make in this proof. We will occasionally deal with infinite-dimensional vectors $V \in \R^\N$. Recall that when there is a function $f$ defined on each element of $V$, we will write $f(V)$ to denote $(f(v_1),f(v_2),\ldots )$. Whenever we use $\norm{\cdot}_{l^p}$ notation, we assert that the vector in the argument does indeed belong to $l^p$. Typically this will be easy to see, and we will remind the reader to this effect.

\medskip
\paragraph*{Part I}

For this proof, we will use an argument about dyadic rationals. 
Fix $k \in \N$. For each $0\leq i \leq 2^k-1$, fix optimal bijections $\ph_i: \mu(i2^{-k}) \r \mu((i+1)2^{-k})$ (Corollary \ref{cor:geod-exist}). Composing these bijections together gives a bijection $\Phi_k: X \r Y$. Let $\g_k$ be the convex-combination curve induced by $\Phi_k$. Also for each  $0\leq i \leq 2^k-1$, let $\psi_i:\g_k(i2^{-k}) \r \mu(i2^{-k})$ denote the bijection induced by the $\Phi_k$ and $\ph_i$ terms. There is a choice here: one can pass to $X$ and then use the maps $\ph_0, \ph_1, \ph_2,\ldots$ or pass to $Y$ and then use the inverses of the $\ph_i$ maps. This choice will not matter, so for convenience, suppose we make the former choice. 

We wish to show $\dlpq(\g_k(i2^{-k}), \mu(i2^{-k})) = 0$ for each $i$. For notational convenience, define $t_i := i2^{-k}$ for each $0\leq i \leq 2^k$. Also for each $x\in X$ and each $t \in [0,1]$, define $x_t := (1-t)x + t \Phi_k(x)$. Note that each $x_{t_i}$ belongs to $\g_k(t_i)$. 

Let $i \in \{1, \ldots, 2^k-1\}$. For each $x \in X$, define $Q(x):=\frac{x - \psi_i(x_{t_i})}{t_i - 0}$ and $R(x):= \frac{\psi_i(x_{t_i}) - \Phi_k(x)}{1 - t_i}$ (recall the idea described in \S\ref{sec:geom-idea}). 

\medskip

\paragraph{The case $p=q \in [2,\infty)$.}
First we consider the case $p=q \in [2,\infty)$.
Then we have:
\begin{align}
 \dlpp(X,Y)^p &\leq \sum_{x\in X} \norm{ x - \Phi_k(x)}_p^p \nonumber \\
&=  \sum_{x\in X} \norm{x - \psi_i(x_{t_i}) + \psi_i(x_{t_i}) - \Phi_k(x)}_p^p \nonumber \\
&= \sum_{x\in X} \norm{t_iQ(x) + (1-t_i)R(x)}_p^p \label{eq:QR} \\
&\leq \sum_{x\in X}\left[ t_i\norm{Q(x)}_p^p + (1-t_i)\norm{R(x)}_p^p - Ct_i(1-t_i)\norm{Q(x) - R(x)}_p^p \right] \label{eq:pp-clarkson}
\end{align}
For the last inequality, we have used Lemma \ref{ineq:sturm-2}. 
Specifically, $Q(x)$ and $R(x)$ are both vectors in $\C$, and we regard them as elements in $l^p$ when applying Lemma \ref{ineq:sturm-2}. 
Notice also that this is the step described in Inequality (\ref{eq:geom-1}). We split the last term into a positive part $P(p=q\in [2,\infty)):= \sum_{x\in X} \left[t_i\norm{Q(x)}_p^p + (1-t_i)\norm{R(x)}_p^p\right]$ and a negative part $N(p=q\in [2,\infty)):= \sum_{x\in X} t_i(1-t_i) \norm{Q(x) - R(x)}_p^p$ (the constant $C>0$ will not be important in the sequel). As described in \S\ref{sec:geom-idea}, our goal would be show that $N(p=q\in [2,\infty)) = 0$. 

\medskip

\paragraph{The case $q = 2, \, p \in [2,\infty)$.}
We now consider the case $q = 2, \, p \in [2,\infty)$. 
Recall the construction of $X^*$ and $Y^*$ in \S\ref{sec:OT}, as well as the functional $J_p$ defined in Equation (\ref{eq:dgm-norm}). 
By the observation in Equation (\ref{eq:bijection-approx}), we can approximate $\Phi_k$ arbitrarily well by a bijection $\s: X^* \r Y^*$ that agrees with $\Phi_k$ on off-diagonal points of both $X$ and $Y$. 
Recall also from \S\ref{sec:OT} that we regard $X^* = (x^1,x^2,x^3,\ldots )$ and $\s(X^*) = (\s(x^1),\s(x^2),\s(x^3),\ldots )$ as infinite-dimensional vectors.

Let $\e > 0$,
and let $\s:X^* \r Y^*$ be a bijection in $\Lambda_{\Phi_k}$ such that $|C_p(\Phi_k)^p - J_p(\s)^p| < \e$.  
By this choice we know $X^* - \s(X^*) \in l^p$.
Combining these observations, we have:
\begin{align*}
\dlp(X,Y)^p &\leq  J_p(\s)^p + \e =  \lnorm X^* - \s(X^*) \rnorm_{l^p}^p + \e.
\end{align*}

Now let $X^*_{t_i}$ denote the vector $(x^1_{t_i}, x^2_{t_i},\ldots )$. 
This is just $\gamma_k(i2^{-k})^*$ with a particular ordering on the elements that is consistent with the ordering initially placed on $X^*$. 


Once again, by the observation in Equation (\ref{eq:bijection-approx}), we can approximate $\psi_i: \g_k(i2^{-k}) \r \mu(i2^{-k})$ arbitrarily well by a bijection $\rho_i: X^*_{t_i} \r \mu(i2^{-k})^*$ such that $\rho_i \in \Lambda_{\psi_i}$. 
Let $\rho_i \in \Lambda_{\psi_i}$ be such that $|C_p(\psi_i)^p - J_p(\rho_i)^p| < \e$. 
Next define
\[\widetilde{Q}(X^*):= \lp \frac{x^1 - \rho_i(x^1_{t_i})}{t_i - 0}, \frac{x^2 - \rho_i(x^2_{t_i})}{t_i - 0}, \ldots \rp, \, 
\widetilde{R}(X^*):= \lp \frac{\rho_i(x^1_{t_i}) - \s(x^1)}{1- t_i }, \frac{\rho_i(x^2_{t_i}) - \s(x^2)}{ 1- t_i }, \ldots \rp.
 \] 
 The choice of $\rho_i$ ensures that $\widetilde{Q}(X^*)$ and $\widetilde{R}(X^*)$ are both in $l^p$.
Then we have:
 \begin{align*}
\lnorm X^* - \s(X^*) \rnorm_{l^p}^p + \e &=  \lnorm X^* - \rho_i(X^*_{t_i}) + \rho_i(X^*_{t_i}) - \s(X^*) \rnorm_{l^p}^p + \e\\
&= \medlnorm t_i \widetilde{Q}(X^*) + (1-t_i)\widetilde{R}(X^*) \medrnorm_{l^p}^p + \e\\
&\leq t_i \medlnorm \widetilde{Q}(X^*)  \medrnorm_{l^p}^p
+ (1-t_i)\medlnorm \widetilde{R}(X^*) \medrnorm_{l^p}^p
- Ct_i(1-t_i) \medlnorm \widetilde{Q}(X^*) - \widetilde{R}(X^*) \medrnorm_{l^p}^p + \e.
\end{align*}
where the last inequality is obtained via Lemma \ref{ineq:sturm-2}. Notice that this application uses the full strength of Lemma \ref{ineq:sturm-2}, in the sense that is used as an inequality between norms of truly infinite-dimensional vectors, as opposed to being used as an inequality between norms in $\R^2$ (cf. Inequality (\ref{eq:pp-clarkson})). 

Next we compare $\medlnorm \widetilde{Q}(X^*)\medrnorm_{l^p}^p$ with $\sum_{x\in X}\norm{Q(x)}_2^p$. Define the bijection $\a: X \r \mu(i2^{-k})$ by $x \mapsto \psi_i(x_{t_i})$. Define another bijection $\beta: X^* \r \mu(i2^{-k})^*$ by $x^j \mapsto \rho_i(x^j_{t_i})$. By our choices of $\s$ and $\rho_i$, we know that $\a$ and $\b$ agree on off-diagonal elements of $X$ and $\mu(i2^{-k})$. Furthermore, $\a$ is the identity on diagonal points that are not matched to off-diagonal points, and $\b$ incurs a total cost bounded by a function of $\e$ from moving such points infinitesimally along the diagonal. Repeating this argument for the other terms, we conclude in particular that  
\[
\dlp(X,Y)^p \leq \sum_{x\in X}\left[ t_i\norm{Q(x)}_2^p + (1-t_i)\norm{R(x)}_2^p - Ct_i(1-t_i)\norm{Q(x) - R(x)}_2^p \right]  + f(\e),
\]
where $f(\e)$ is some positive function of $\e$ that tends to zero as $\e \r 0$. 

As before, we define 
\begin{align*}
P(p\in[2,\infty),q=2)&:= \sum_{x\in X} \left[t_i\norm{Q(x)}_2^p + (1-t_i)\norm{R(x)}_2^p\right] \\
N(p\in [2,\infty),q=2)&:=  \sum_{x\in X} t_i(1-t_i) \norm{Q(x) - R(x)}_2^p.
\end{align*}

We now show how to obtain similar quantities in the final remaining case.

\paragraph{The case $q = 2, \, p \in (1,2)$.}
Let $\e > 0$. Now let $\s \in \Lambda_{\Phi_k}$ be such that $|C_p(\Phi_k)^2 - J_p(\s)^2| < \e$, and let $\rho_i \in \Lambda_{\psi_i}$ be such that $|C_p(\psi_i)^2 - J_p(\rho_i)^2| < \e$. As before, we have:
\begin{align*}
\dlp(X,Y)^2&\leq  J_p(\s)^2 + \e\\
&= \medlnorm t_i \widetilde{Q}(X^*) + (1-t_i)\widetilde{R}(X^*) \medrnorm_{l^p}^2 + \e\\
&\leq t_i \medlnorm \widetilde{Q}(X^*)  \medrnorm_{l^p}^2 
+ (1-t_i)\medlnorm \widetilde{R}(X^*)\medrnorm_{l^p}^2
- (p-1)t_i(1-t_i) \medlnorm \widetilde{Q}(X^*) - \widetilde{R}(X^*) \medrnorm_{l^p}^2 + \e,
\end{align*}
where the last inequality holds via Lemma \ref{ineq:sturm-3}. 
In this case, the argument deviates from that of the preceding cases.
We define:
\begin{align*}
P(p\in (1,2),q=2)&:= t_i \medlnorm \widetilde{Q}(X^*)  \medrnorm_{l^p}^2 
+ (1-t_i)\medlnorm \widetilde{R}(X^*)\medrnorm_{l^p}^2  + \e\\
N(p\in (1,2),q=2)&:= \medlnorm \widetilde{Q}(X^*) - \widetilde{R}(X^*) \medrnorm_{l^p}^2.
\end{align*}

In each of these three cases, we have obtained an inequality of the form $\dlpq(X,Y)^k \leq  P - CN$, where $C>0$ is some constant. 

\begin{claim}
\label{cl:positive-part}
We claim that in each case presented above, $P \leq \dlpq(X,Y)^k$ for the appropriate $k$. In the first two cases, $k = p$, and in the third case, $k = 2$. 
\end{claim}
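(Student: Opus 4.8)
The plan is to establish the slightly stronger fact that in the first two cases $P$ equals $\dlpq(X,Y)^p$ exactly, while in the third case $P \leq \dlpq(X,Y)^2$ up to the error already carried by $\e$. Write $D := \dlpq(X,Y)$ and $t_i := i2^{-k}$. The single observation driving all three cases is that the bijection $X \r \mu(t_i)$ implicit in the definition of $\psi_i$ is exactly the composite $\Psi_i := \ph_{i-1}\circ \cdots \circ \ph_0$ of the optimal consecutive bijections along $\mu$, so that $\a(x) = \psi_i(x_{t_i}) = \Psi_i(x)$; dually, the bijection sending $\psi_i(x_{t_i})$ to $\Phi_k(x)$ is the composite $\ph_{2^k-1}\circ \cdots \circ \ph_i$ from $\mu(t_i)$ to $Y$. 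I would read both identifications straight off the construction of $\Phi_k$ and $\psi_i$ in Part I.

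The main step is to show that these composites realize the geodesic distances, namely $C_p[l^q](\Psi_i) = t_i D$ and $C_p[l^q](\ph_{2^k-1}\circ\cdots\circ\ph_i) = (1-t_i)D$. For the upper bound I would invoke the triangle inequality for $C_p[l^q]$: for bijections $f$ and $g$, $C_p[l^q](g\circ f) \leq C_p[l^q](f) + C_p[l^q](g)$, which follows from the pointwise triangle inequality in the $l^q$ ground norm, the Minkowski inequality in $l^p$, and reindexing the second sum through $f$. Telescoping across the dyadic nodes and using that each $\ph_j$ is optimal with $C_p[l^q](\ph_j) = \dlpq(\mu(t_j),\mu(t_{j+1})) = 2^{-k}D$ gives $C_p[l^q](\Psi_i) \leq i\,2^{-k}D = t_iD$. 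The reverse inequality is immediate, since $\Psi_i$ is one admissible bijection $X\r\mu(t_i)$ and $\dlpq$ is the infimal cost, whence $C_p[l^q](\Psi_i) \geq \dlpq(X,\mu(t_i)) = t_iD$ by the geodesic property. The second composite is treated identically over the nodes between $t_i$ and $1$.

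Substituting these equalities into $P$ finishes the first two cases. Using $t_i\norm{Q(x)}_q^p = \norm{x-\psi_i(x_{t_i})}_q^p/t_i^{p-1}$ and summing over $x$, we get $\sum_{x} t_i\norm{Q(x)}_q^p = C_p[l^q](\Psi_i)^p/t_i^{p-1} = (t_iD)^p/t_i^{p-1} = t_iD^p$, and symmetrically $\sum_x (1-t_i)\norm{R(x)}_q^p = (1-t_i)D^p$, so $P = t_iD^p + (1-t_i)D^p = D^p$. For the case $q=2,\,p\in(1,2)$ the identical computation runs through the $\norm{\cdot}_{l^p}$ reformulation: by (\ref{eq:bijection-approx}) the bijection underlying $\widetilde{Q}(X^*)$ lies in $\Lambda_\a$ with $\a=\Psi_i$, and may be chosen so that its $J_p$-cost exceeds $C_p[l^q](\Psi_i)=t_iD$ by at most a quantity vanishing with $\e$; this yields $\medlnorm \widetilde{Q}(X^*)\medrnorm_{l^p} \leq D + f(\e)$ and likewise for $\widetilde{R}(X^*)$, hence $P \leq D^2$ after absorbing $f(\e)$ into the error already tracked in the main proof.

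The step I expect to be most delicate is this last passage to $X^*$. Unlike the first two cases, the positive part there is measured by an outer square rather than a $p$-th power, and the vectors $\widetilde{Q}(X^*), \widetilde{R}(X^*)$ live in the infinite-dimensional $l^p$ built from the starred diagrams rather than on $X$ itself. The point requiring care is to confirm that matching the redundant diagonal points of $X^*$ and $\mu(t_i)^*$ contributes negligible cost, so that the clean identity $C_p[l^q](\Psi_i)=t_iD$ from the main step genuinely transfers to a bound on $\medlnorm\widetilde{Q}(X^*)\medrnorm_{l^p}$; this is exactly what (\ref{eq:bijection-approx}) and the density of the rationals on the diagonal are designed to supply.
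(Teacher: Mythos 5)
Your proposal is correct, and it reaches the claim by a genuinely different (and somewhat cleaner) route than the paper. The paper keeps the sum over $x$ on the outside: for each $x$ it telescopes $x - \psi_i(x_{t_i})$ across the dyadic nodes \emph{inside} the ground norm, applies the pointwise triangle inequality, and then needs the Jensen-type Lemma \ref{ineq:sturm-1} to redistribute the $p$-th power of the resulting sum over the increments with weights $(t_j - t_{j-1})^{p-1}$; only after summing over $x$ and exchanging the order of summation does optimality of the $\ph_j$ and the geodesic property of $\mu$ enter. You instead telescope at the level of the cost functional, using subadditivity of $C_p[l^q]$ under composition (pointwise triangle inequality plus Minkowski over the index $x$, with a reindexing through the bijection), to get $C_p[l^q](\ph_{i-1}\circ\cdots\circ\ph_0) \le t_i\,\dlpq(X,Y)$ directly, after which $\sum_x t_i\lnorm Q(x)\rnorm_q^p = C_p[l^q](\Psi_i)^p/t_i^{p-1} \le t_i\,\dlpq(X,Y)^p$ is a one-line substitution. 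This bypasses Lemma \ref{ineq:sturm-1} entirely in the first two cases, and your additional lower bound $C_p[l^q](\Psi_i)\ge \dlpq(X,\mu(t_i)) = t_i\,\dlpq(X,Y)$ upgrades the claim to the exact identity $P = \dlpq(X,Y)^p$, which the paper does not state but which is consistent with (and clarifies) the subsequent forcing of $N=0$. Your treatment of the third case matches the paper's in substance: both must route the clean identity through the starred diagrams via Equation (\ref{eq:bijection-approx}) and absorb the cost of shuffling redundant diagonal points into an $f(\e)$ term, and you correctly identify this as the only delicate point; the one bookkeeping item to make explicit is that a single choice of $\rho_i$ must serve simultaneously for $\widetilde{Q}(X^*)$, $\widetilde{R}(X^*)$, and the negative part $N$, which the density of $\Delta_\Q^\infty$ supplies exactly as in the paper.
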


As explained in \S\ref{sec:geom-idea}, this shows---at least in the first case---that $N = 0$, and so $\dlpq(\mu(t_i),\g_k(t_i)) = 0$. In the second and third cases, we will obtain an additional positive term $f(\e)$ which is a function of $\e$ that tends to zero as $\e \r 0$, so we will drop this term and again obtain $N=0$. So assuming Claim \ref{cl:positive-part}, and because $i$ was arbitrary, we now have $\dlpq(\mu(i2^{-k}),\g_k(i2^{-k})) = 0$ for each $i \in \{1,\ldots, 2^k-1\}$. Additionally, this argument shows that $\Phi_k$ is indeed an optimal bijection.  

The proof of this claim comprises the rest of Part I. 

\medskip
\paragraph*{Proof of Claim \ref{cl:positive-part} in Part I} 

First we deal with the case $P(p=q\in [2,\infty))$. Here we have:
\begin{align}
P &= \sum_{x\in X} t_i\norm{Q(x)}^p_p + (1-t_i)\norm{R(x)}^p_p \label{step-begin}\\
&= \sum_{x\in X} \frac{1}{t_i^{p-1}} \norm{x - \psi_i(x_{t_i})}^p_p + \frac{1}{(1-t_i)^{p-1}}\norm{\psi_i(x_{t_i}) - \Phi_k(x) }^p_p \nonumber \\
&= \sum_{x\in X} \frac{1}{(t_i - t_0)^{p-1}} \norm{\psi_0(x_0) - \psi_i(x_{t_i})}^p_p + \frac{1}{(t_{2^k}-t_i)^{p-1}}\norm{\psi_i(x_{t_i}) - \psi_1(x_1) }^p_p  \nonumber \\
&= \sum_{x\in X} \frac{1}{(t_i - t_0)^{p-1}} \medlnorm \sum_{j=1}^i \psi_{j-1}(x_{t_{j-1}}) - \psi_{j}(x_{t_j}) \medrnorm^p_p + \frac{1}{(t_{2^k}-t_i)^{p-1}}\medlnorm  \sum_{j=i+1}^{2^k} \psi_{j-1}(x_{t_{j-1}}) - \psi_{j}(x_{t_j}) \medrnorm^p_p \nonumber \\
&\leq  \sum_{x\in X} \frac{1}{(t_i - t_0)^{p-1}} \medlp \sum_{j=1}^i \norm{\psi_{j-1}(x_{t_{j-1}}) - \psi_{j}(x_{t_j}) }_p \medrp^p + \frac{1}{(t_{2^k}-t_i)^{p-1}}  \medlp \sum_{j=i+1}^{2^k} \norm{\psi_{j-1}(x_{t_{j-1}}) - \psi_{j}(x_{t_j}) }_p \medrp^p 
\label{step-triangle-ineq}
\\
&\leq  \sum_{x\in X} \sum_{j=1}^i  \frac{\norm{\psi_{j-1}(x_{t_{j-1}}) - \psi_{j}(x_{t_j}) }_p^p}{(t_j - t_{j-1})^{p-1}}  +  \sum_{j=i+1}^{2^k} \frac{\norm{\psi_{j-1}(x_{t_{j-1}}) - \psi_{j}(x_{t_j})}^p_p}{(t_j - t_{j-1})^{p-1}} 
\label{step-jensen-ineq}
\\
& =   \sum_{x\in X} \sum_{j=1}^{2^k}  \frac{\norm{\psi_{j-1}(x_{t_{j-1}}) - \psi_{j}(x_{t_j}) }^p_p}{(t_j - t_{j-1})^{p-1}} \nonumber \\
& =  2^{k(p-1)} \sum_{x\in X} \sum_{j=1}^{2^k}  \norm{\psi_{j-1}(x_{t_{j-1}}) - \psi_{j}(x_{t_j}) }^p_p \nonumber \\
& =  2^{k(p-1)}  \sum_{j=1}^{2^k} \sum_{x\in X} \norm{\psi_{j-1}(x_{t_{j-1}}) - \psi_{j}(x_{t_j}) }^p_p \nonumber \\
& =  2^{k(p-1)}  \sum_{j=1}^{2^k} \dlpp(\mu(t_{j-1}),\mu(t_j))^p 
\label{step-optimal-bij}
\\
&=  2^{k(p-1)}  \sum_{j=1}^{2^k} \lp \frac{\dlpp(X,Y)}{2^k}\rp^p =  \dlpp(X,Y)^p.
\end{align}

Step (\ref{step-triangle-ineq}) follows from the triangle inequality, (\ref{step-jensen-ineq}) follows from Lemma \ref{ineq:sturm-1}, and (\ref{step-optimal-bij}) follows because the $\psi_j$ maps are constructed using the $\ph_j$ maps, which are optimal by assumption. 

The case $P(p\in[2,\infty),q=2)$ follows almost immediately, as it is exactly analogous to the preceding case with $\norm{\cdot}_p^p$ terms replaced by $\norm{\cdot}_2^p$. Specifically, we get 
\[\dlp(X,Y)^p \leq P - CN +  f(\e) \leq \dlp(X,Y)^p - CN + f(\e).\]
But $\e>0$ was arbitrary, and $f(\e) \r 0$ as $\e \r 0$. So we have $\dlp(X,Y)^p \leq \dlp(X,Y)^p - CN$, and so $N = 0$.

Finally we handle the case $P(p\in (1,2),q=2)$. Here we have:
\begin{align}
P  -\e &=  (t_i - t_0) \medlnorm \widetilde{Q}(X^*)  \medrnorm_{l^p}^2 + (1-t_i)\medlnorm \widetilde{R}(X^*)  \medrnorm_{l^p}^2 \nonumber \\
&= \frac{t_i - t_0}{(t_i - t_0)^2}  \medlnorm \rho_0(X^*_{t_0}) - \rho_i(X^*_{t_i}) \medrnorm_{l^p}^2 + 
 \frac{1 - t_i }{(1 - t_i)^2}  \medlnorm \rho_i(X^*_{t_i}) - \rho_{2^k}(X^*_{t_{2^k}}) \medrnorm_{l^p}^2 \nonumber \\
&= \frac{1}{t_i - t_0}  \medlnorm \sum_{j=1}^i \rho_{j-1}(X^*_{t_{j-1}}) - \rho_j(X^*_{t_j}) \medrnorm_{l^p}^2 + 
 \frac{1}{1 - t_i}  \medlnorm \sum_{j=i+1}^{2^k} \rho_{j-1}(X^*_{t_{j-1}}) - \rho_{j}(X^*_{t_{j}}) \medrnorm_{l^p}^2 \nonumber \\
&\leq \frac{1}{t_i - t_0}   \lp \sum_{j=1}^i \medlnorm \rho_{j-1}(X^*_{t_{j-1}}) - \rho_j(X^*_{t_j}) \medrnorm_{l^p} \rp^2 + 
 \frac{1}{1 - t_i}  \lp \sum_{j=i+1}^{2^k}   \medlnorm \rho_{j-1}(X^*_{t_{j-1}}) - \rho_{j}(X^*_{t_{j}}) \medrnorm_{l^p} \rp^2 \label{step:case3-minkowski} \\
&\leq   \sum_{j=1}^i  \frac{ \medlnorm \rho_{j-1}(X^*_{t_{j-1}}) - \rho_j(X^*_{t_j}) \medrnorm_{l^p}^2}{t_j - t_{j-1}} + 
\sum_{j=i+1}^{2^k}   \frac{\medlnorm \rho_{j-1}(X^*_{t_{j-1}}) - \rho_{j}(X^*_{t_{j}}) \medrnorm_{l^p}^2}{t_j - t_{j-1}} \label{step:case3-jensen} \\
&=   \sum_{j=1}^{2^k}  \frac{ \medlnorm \rho_{j-1}(X^*_{t_{j-1}}) - \rho_j(X^*_{t_j}) \medrnorm_{l^p}^2}{t_j - t_{j-1}} \nonumber \\
&\leq 2^k \sum_{j=1}^{2^k}  \frac{\dlp(X,Y)^2}{2^{2k}}  + f(\e) \label{step:case3-optimal} \\
&= \dlp(X,Y)^2 + f(\e). 
\end{align}
Adjusting $f$ as needed, we write $P \leq \dlp(X,Y)^2 + f(\e)$. Here $f(\e) \r 0$ as $\e \r 0$. Since $\e> 0$ was arbitrary, it follows that $P \leq \dlp(X,Y)^2$.
Here Steps (\ref{step:case3-minkowski}), (\ref{step:case3-jensen}), and (\ref{step:case3-optimal}) hold by the Minkowski inequality for $l^p$ norms, by Lemma \ref{ineq:sturm-1}, and by the optimality of the $\phi_j$ maps, respectively. Note that the $\e$ error term comes from using the $\rho$ maps, which agree with the $\ph$ maps on off-diagonal points and incur an infinitesimal error from moving diagonal points. This concludes the proof of the claim.

By the discussion following the statement of Claim \ref{cl:positive-part}, and the discussion in \S\ref{sec:geom-idea}, we immediately obtain in the first two cases that $\dlpq(\g(t_i),\mu(t_i)) = 0$ for each $i \in \set{1,\ldots, 2^k - 1}$. In the third case, we obtain $\medlnorm \widetilde{Q}(X^*) - \widetilde{R}(X^*) \medrnorm_{l^p} = 0$ for a given $t_i$. This in turn implies that for each $ j \in \N$, we have $\rho_i(x^j_{t_i}) = (1-t_i)x^j + t\s(x^j)$. In the cases where $x^j$ or $\s(x^j)$ is off-diagonal, we then have $\psi_i(x^j_{t_i}) = \rho_i(x^j_{t_i})$. On the diagonal points of $X$ that get matched to diagonal points of $Y$, $\psi$ is the identity by definition. Thus we again have $\sum_{x \in X} \norm{x_{t_i} - \psi_i(x_{t_i})}_2^p = 0$, which shows $\dlpq(\g(t_i),\mu(t_i)) = 0$ in the case $q = 2$, $p \in (1,2)$.

\medskip
\paragraph*{Part II} 
We begin with an observation. Let $\Phi: A \r B$ be any optimal bijection between diagrams $A,B \in \dgml$ that have finitely many off-diagonal points. Any off-diagonal point of $A$ is mapped either to an off-diagonal point of $B$ or to a copy of its projection onto the diagonal in $B$. In particular, we know by optimality that $\Phi$ is the identity on each point on the diagonal of $A$ that is not the diagonal projection of a point in $B$. Since $A$ and $B$ both have finitely many off-diagonal points and hence finitely many diagonal projections, we know that $\Phi$ is the identity on all but finitely many points of $A$.

Now consider the sequence $\lp \Phi_{k} \rp_k$ of bijections $X\r Y$ chosen at the beginning of the proof. Let $X_1 \subseteq X$ denote the union of the finitely many off-diagonal points of $X$ with the finitely many copies of diagonal points that could possibly be matched to an off-diagonal point of $Y$ by projection. Define $Y_1 \subseteq Y$ similarly. We showed above that each $\Phi_k$ is optimal, so we know by the preceding observation that each $\Phi_k$ is the identity on $X\setminus X_1$. Thus we view each $\Phi_k$ as a map $X_1 \r Y$.

Write $X_1 = \{x_1,x_2,\ldots, x_n\}$. Choose a subsequence of $\lp \Phi_k \rp_k$ that is constant on $x_1$. Such a subsequence must exist by finiteness of $Y_1$. By choosing further subsequences, we obtain a subsequence that is constant on $X_1$. Let $\Phi_*:X \r Y$ denote the bijection given by this subsequence, and let $\g_*$ denote its induced geodesic. Fix $p,q$ in the prescribed ranges. Then we have $\dlpq(\mu(i2^{-k}),\g_*(i2^{-k})) = 0$ for each $i \in \{1,\ldots, 2^k-1\}$, for arbitrarily large $k$. Continuity of $\mu$ and $\g_*$ now shows that $\dlpq(\mu(t),\g_*(t)) = 0$ for each $t \in [0,1]$. \qedhere

\end{proof}

Finally we supply the proof of Theorem \ref{thm:char-II}.

\begin{proof}[Proof of Theorem \ref{thm:char-II}]
Part I of Theorem \ref{thm:char-I} is independent of any finiteness assumption, so it holds in this setting as well. Assume we are in the setup obtained from Part I of Theorem \ref{thm:char-I}, i.e. we have a sequence of optimal bijections $\Phi_k: X \r Y$. In particular, we have $C_p[l^q](\Phi_k) = \dlpq(X,Y)$ for each $k$. By Lemma \ref{lem:limiting-bijection}, we obtain a subsequence indexed by $L \subseteq \N$ and a limiting bijection $\Phi_*:X \r Y$ such that $\Phi_k \xrightarrow{k \in L, \, k \r \infty} \Phi_*$ pointwise and $C_p[l^q](\Phi_*) = \dlpq(X,Y)$.

Let $\g_*$ denote the geodesic induced by $\Phi_*$. Then we have $\dlpq(\mu(i2^{-k}),\g_*(i2^{-k})) = 0$ for each $i \in \{1,\ldots, 2^k-1\}$, for arbitarily large $k$. Continuity of $\mu$ and $\g_*$ now shows that $\dlpq(\mu(t),\g_*(t)) = 0$ for each $t \in [0,1]$. This proves the theorem.\qedhere
\end{proof}

\section{Discussion}

We have proved that in persistence diagram space equipped with several families of $l^p[l^q]$ metrics, every geodesic can be represented as a convex combination. The most interesting special case of this result is when $p =q=2$. The convex combination structure of geodesics in this case can be applied to obtain a variety of important geometric consequences, as shown in \cite{tmmh}. Several other cases remain open, e.g. the cases $p=q  \in (1,2)$ and $p \in (1,\infty),\, q \in (1,2)\cup (2,\infty)$. 

\medskip

\paragraph{Acknowledgements} We thank Facundo M\'{e}moli and Katharine Turner for many useful comments and suggestions.

\bibliographystyle{alpha}
\bibliography{../biblio}

\newcommand{\etalchar}[1]{$^{#1}$}
\begin{thebibliography}{TMMH14}

\bibitem[BBI01]{burago}
Dmitri Burago, Yuri Burago, and Sergei Ivanov.
\newblock {\em A Course in Metric Geometry}, volume~33 of {\em AMS Graduate
  Studies in Math.}
\newblock American Mathematical Society, 2001.

\bibitem[BCL94]{ball1994sharp}
Keith Ball, Eric~A Carlen, and Elliott~H Lieb.
\newblock Sharp uniform convexity and smoothness inequalities for trace norms.
\newblock {\em Inventiones mathematicae}, 115(1):463--482, 1994.

\bibitem[BH11]{bridson2011metric}
Martin~R Bridson and Andr{\'e} Haefliger.
\newblock {\em Metric spaces of non-positive curvature}, volume 319.
\newblock Springer Science \& Business Media, 2011.

\bibitem[BJ40]{boas1940some}
Ralph~P Boas~Jr.
\newblock Some uniformly convex spaces.
\newblock {\em Bulletin of the American Mathematical Society}, 46(4):304--311,
  1940.

\bibitem[Cla36]{clarkson1936uniformly}
James~A Clarkson.
\newblock Uniformly convex spaces.
\newblock {\em Transactions of the American Mathematical Society},
  40(3):396--414, 1936.

\bibitem[CM18]{dgh-era}
Samir Chowdhury and Facundo M{\'e}moli.
\newblock Explicit geodesics in {G}romov-{H}ausdorff space.
\newblock {\em Electronic Research Announcements in Mathematical Sciences},
  2018.

\bibitem[LCO18]{lacombe2018large}
Th{\'e}o Lacombe, Marco Cuturi, and Steve Oudot.
\newblock Large scale computation of means and clusters for persistence
  diagrams using optimal transport.
\newblock In {\em Advances in Neural Information Processing Systems}, pages
  9770--9780, 2018.

\bibitem[MMH11]{mileyko2011probability}
Yuriy Mileyko, Sayan Mukherjee, and John Harer.
\newblock Probability measures on the space of persistence diagrams.
\newblock {\em Inverse Problems}, 27(12):124007, 2011.

\bibitem[MTB{\etalchar{+}}15]{munch2015probabilistic}
Elizabeth Munch, Katharine Turner, Paul Bendich, Sayan Mukherjee, Jonathan
  Mattingly, John Harer, et~al.
\newblock Probabilistic {F}r{\'e}chet means for time varying persistence
  diagrams.
\newblock {\em Electronic Journal of Statistics}, 9(1):1173--1204, 2015.

\bibitem[Stu12]{sturm2012space}
Karl-Theodor Sturm.
\newblock The space of spaces: curvature bounds and gradient flows on the space
  of metric measure spaces.
\newblock {\em arXiv preprint arXiv:1208.0434}, 2012.

\bibitem[TMMH14]{tmmh}
Katharine Turner, Yuriy Mileyko, Sayan Mukherjee, and John Harer.
\newblock {F}r{\'e}chet means for distributions of persistence diagrams.
\newblock {\em Discrete \& Computational Geometry}, 52(1):44--70, 2014.

\bibitem[Tur13]{turner2013means}
Katharine Turner.
\newblock Means and medians of sets of persistence diagrams.
\newblock {\em arXiv preprint arXiv:1307.8300}, 2013.

\bibitem[Vil03]{villani2003topics}
C{\'e}dric Villani.
\newblock {\em Topics in optimal transportation}.
\newblock Number~58. American Mathematical Soc., 2003.

\end{thebibliography}

\end{document}